\newtheorem{theorem}{Theorem}
\newtheorem{lemma}[theorem]{Lemma}
\newtheorem{conjecture}{Conjecture}
\newtheorem{proposition}[theorem]{Proposition}
\def\Aut{{\rm Aut}}
\newcommand{\name}{hypercubical\ }
\newcommand{\recipe}{recipe}
\newcommand{\checkered}{diagonal checker}
\newcommand{\graphon}{W_{\boxplus}}
\newcommand{\constr}{\mathcal{C}_{\boxplus}}
\newcommand{\cube}{r}
\newcommand{\ccube}{\mathfrak{R}}
\newcommand{\twoblock}[1]{\langle#1\rangle}
\newcommand{\A}{A^{\boxplus}}
\newcommand{\B}{B^{\boxplus}}
\newcommand{\C}{C^{\boxplus}}
\newcommand{\D}{D^{\boxplus}}
\newcommand{\E}{E^{\boxplus}}
\newcommand{\F}{F^{\boxplus}}
\newcommand{\X}{X^{\boxplus}}
\newcommand{\Y}{Y^{\boxplus}}
\newcommand{\JJ}{\mathcal{J}}
\newcommand{\WW}{W}
\newcommand{\NN}{{\mathbb N}}
\newcommand{\nati}{\NN^{\ast}}
\newcommand{\re}[1]{\widehat{#1}}
\def\dif{{\rm d}}
\begin{document}
\title{Infinite dimensional finitely forcible graphon\thanks{The~work leading to this invention has received funding from the European Research Council under the European Union's Seventh Framework Programme (FP7/2007-2013)/ERC grant agreement no.~259385. A part of this work was done during a visit of the last two authors to the Institut Mittag-Leffler (Djursholm, Sweden).}}
\author{Roman Glebov\thanks{School of Computer Science and Engineering, Hebrew University, Jerusalem 9190401, Israel.
			    E-mail: {\tt roman.l.glebov@gmail.com}.
			    Previous affiliations: Mathematics Institute and DIMAP, University of Warwick, Coventry CV4 7AL, UK.
                            Department of Mathematics, ETH, 8092 Zurich, Switzerland.
			    }\and
	Tereza Klimo{\v s}ov\'a\thanks{Department of Applied Mathematics, Faculty of Mathematics and Physics, Charles University, Malostransk\'e n\'am.~25, 118 00 Prague 1, Czech Republic. E-mail: {\tt tereza@kam.mff.cuni.cz}. Previous affiliation: Mathematics Institute and DIMAP, University of Warwick, Coventry CV4 7AL, UK. This author was supported by Center of Excellence --- ITI, project P202/12/G061 of GA \v{C}R and by the Center for Foundations of Modern Computer Science (Charles University project UNCE/SCI/004).}
\and
        Daniel Kr\'al'\thanks{Faculty of Informatics, Masaryk University, Botanick\'a 68A, 602 00 Brno, Czech Republic, and Mathematics Institute, DIMAP and Department of Computer Science, University of Warwick, Coventry CV4 7AL, UK. E-mail: {\tt dkral@fi.muni.cz}. This author was also supported by the European Research Council (ERC) under the European Union’s Horizon 2020 research and innovation programme (grant agreement No 648509). This publication reflects only its authors' view; the European Research Council Executive Agency is not responsible for any use that may be made of the information it contains.}
	}

\date{}
\maketitle
\begin{abstract}
Graphons are analytic objects associated with convergent sequences of dense graphs. 
Finitely forcible graphons, i.e., those determined by finitely many subgraph densities,
are of particular interest because of their relation to various problems in extremal combinatorics and theoretical computer science.
Lov\'asz and Szegedy conjectured that the topological space of typical vertices of a finitely forcible graphon
always has finite dimension,
which would have implications on the minimum number of parts in its weak $\varepsilon$-regular partition.
We disprove the conjecture by constructing a finitely forcible graphon with the space of typical vertices
that has infinite dimension.
\end{abstract}

\section{Introduction}
\label{sec:intro}

Analytic objects associated with convergent sequences of combinatorial objects
have recently attracted significant amount of attention. This line of
research was initiated by the theory of limits of dense graphs~\cite{bib-borgs08+,bib-borgs+,bib-borgs06+,bib-lovasz06+},
followed by limits of sparse graphs~\cite{bib-bollobas11+,bib-elek07},
permutations~\cite{bib-hoppen-lim1,bib-hoppen-lim2}, partial orders~\cite{bib-janson11} and others.
Analytic methods applied to such limit objects led to results
in many areas of mathematics and computer science, in particular in extremal combinatorics~\cite{bib-flag1, bib-flag2, bib-flagrecent, bib-flag3, bib-flag4, bib-flag5, bib-flag6, bib-flag6.5, bib-flag7, bib-flag8, bib-flag9, bib-flag10, bib-razborov07,bib-flag11, bib-flag12} and property testing~\cite{bib-hoppen-test,bib-lovasz10+}.

In this paper we are concerned with limits of dense graphs and
in particular with those determined by finitely many subgraph densities.
This phenomenon, which is known as finite forcibility, is closely related to quasirandomness of combinatorial objects,
whose study was initiated by Chung, Graham and Wilson~\cite{bib-chung89+},
R\"odl~\cite{bib-rodl} and Thomason~\cite{bib-thomason, bib-thomason2}.
In the setting of graph limits,
large dense graphs are represented by analytic objects called graphons and
the just mentioned results assert that every constant graphon is finitely forcible.
This result was generalized by Lov\'asz and S\'os~\cite{bib-lovasz08+}, see also~\cite{bib-spencer10},
who proved that every step graphon,
which is a multipartite graphon with uniform densities between and within its parts, is finitely forcible.

We are interested in the structure of the space of typical vertices of finitely forcible graph limits.
We consider two spaces of typical vertices, which we formally define in Section~\ref{sec:def}.
One is the space studied in~\cite{bib-lovasz11+} and is denoted by $T(W)$;
informally speaking,
$T(W)$ is formed by the neighbor functions (``rows'' of a graphon $W$) with the $L^1$-topology.
The other, which is denoted by $\overline{T}(W)$, is the space studied in~\cite[Chapter 13]{bib-lovasz-book},
where the $L^1$-metric is replaced by a finer metric.
The structure of the space $\overline{T}(W)$
is closely related to weak $\varepsilon$-regular partitions of $W$~\cite{bib-lovasz-book,bib-lovasz07+};
in particular, if $\overline{T}(W)$ has finite Minkowski dimension,
then $W$ has a weak $\varepsilon$-regular partition with a number of parts polynomial in $\varepsilon^{-1}$. 
We note that there are graphons $W$ such that
the minimum number of parts in a weak $\varepsilon$-regular partition of $W$
is exponential in $\varepsilon^{-2}$~\cite{bib-conlon12+}.
In particular, graphons $W$ such that the Minkowski dimension of $\overline{T}(W)$ is finite
have simple structure from the regularity decomposition point of view.

Lov\'asz and Szegedy~\cite[Conjecture 10]{bib-lovasz11+},
led by examples of finitely forcible graphons that were known at that time,
conjectured that the space of typical vertices of a finitely forcible graphon always has finite dimension.
We cite the conjecture verbatim.

\begin{conjecture}
\label{conj:2}
If $W$ is a finitely forcible graphon, then $T(W)$ is finite dimensional.
(We intentionally do not specify which notion of dimension is meant here---a result concerning any variant would be interesting.)
\end{conjecture}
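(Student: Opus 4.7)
The plan is to \emph{disprove} Conjecture~\ref{conj:2} by explicitly constructing a finitely forcible graphon $\graphon$ whose space $T(\graphon)$ of typical vertices is infinite dimensional. The strategy is to design $\graphon$ so that the neighborhood structure of a positive-measure set of vertices encodes an independent infinite sequence of binary coordinates, producing a subset of $T(\graphon)$ homeomorphic to the Hilbert cube; since every reasonable notion of dimension is infinite on such a set, any interpretation of the conjecture will fail.

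For the construction, I would build a \name graphon based on a \recipe that partitions $[0,1]$ into a coordinate-encoding part, whose vertices carry the binary address of a point in $\{0,1\}^{\mathbb{N}}$, and a countable family of witness parts, where the $i$-th witness part distinguishes vertices whose $i$-th coordinate differs. Edges between the coordinate part and the $i$-th witness part are arranged so that the $i$-th coordinate of a coordinate vertex is recoverable from its degree profile to the $i$-th witness part. A \checkered pattern glues the witness parts together uniformly, so that a single local rule governs the entire graphon and a bounded list of subgraph densities has a chance of pinning it down.

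The hardest step will be finite forcibility. Following the Lov\'asz--S\'os template, I would first force a coarse step-function skeleton isolating the witness and coordinate parts and their pairwise densities; then, using a few carefully chosen decorated subgraph densities, force the within-part structure to be marginal-regular in the right senses; and finally couple the coordinates across witness parts through a single self-similar constraint that compares densities at different scales simultaneously, so that finitely many equalities recursively pin down the behavior at all infinitely many coordinates. The key technical obstacle is precisely this recursion: because infinitely many coordinates must be forced by finitely many constraints, the construction must have a built-in scaling symmetry, so that one equation controls an entire countable family of coordinates. I expect the bulk of the work to consist of verifying, via a careful induction along this self-similarity, that any graphon satisfying the chosen densities must coincide with $\graphon$ up to weak isomorphism.

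Once finite forcibility is established, the dimension computation is comparatively short: two typical vertices that differ in infinitely many coordinates have quantitatively different neighborhood profiles and are therefore separated in the topology of $T(\graphon)$, while any refinement of an open neighborhood forces the resolution of additional coordinates. Hence $T(\graphon)$ contains an isomorph of the Hilbert cube, whose Minkowski, Hausdorff, and topological dimensions are all infinite, contradicting Conjecture~\ref{conj:2}.
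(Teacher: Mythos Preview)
Your overall strategy---construct a partitioned graphon with a coordinate-encoding part and a countable family of witness parts, then prove finite forcibility via a self-similar recursion so that finitely many constraints control all levels---is exactly the route the paper takes. The paper's part $\D$ is your coordinate part, the levels $\B_{2,1},\B_{2,2},\ldots$ are your witness parts, and the \checkered\ structure on $A_1\times A_1$ together with the shifted \checkered\ on $A_1\times A_3$ provides the scaling symmetry that lets a fixed list of decorated constraints cascade through all levels.

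There is, however, a genuine gap in your plan. You say the coordinate vertices carry ``the binary address of a point in $\{0,1\}^{\mathbb{N}}$'' and that this yields a subspace homeomorphic to the Hilbert cube. These two claims are incompatible: an infinite product of two-point spaces is the Cantor set, not $[0,1]^{\infty}$. With the $L^1$-type metric inherited from $T(W)$ (the $i$-th coordinate contributing on the order of $2^{-i}$), the resulting subspace is bi-Lipschitz to the standard Cantor set, which has topological dimension $0$ and Minkowski and Hausdorff dimension $1$. So a binary encoding gives a \emph{finite}-dimensional subspace under every standard notion and does not disprove the conjecture.

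The fix, which is what the paper does, is to make each coordinate \emph{real-valued}: the $i$-th coordinate of a vertex $x\in\D$ is its relative degree $\deg_{\B_{2,i}}x\in[0,1]$, and the constraints are arranged so that these degrees are independent and uniformly distributed. This requires extra machinery beyond what a binary scheme would need---the parts $\B_4$ and $\B_5$ store the running products $\prod_i \deg_{\B_{2,i}}x$ and $\prod_i(1-\deg_{\B_{2,i}}x)$, and the projection and distribution constraints use these to force the measure-preserving property of the coordinate map at each level. Only with continuous coordinates do you obtain a subspace homeomorphic to $[0,1]^{\infty}$, whose Lebesgue, Hausdorff, and Minkowski dimensions are all infinite.
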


\noindent In this paper we construct a graphon $\graphon$, which we call a {\em \name} graphon,
such that $\graphon$ is finitely forcible and
both $T(\graphon)$ and $\overline{T}(\graphon)$ contain subspaces homeomorphic to $[0,1]^{\infty}$.

\begin{theorem}\label{thm:main}
The \name graphon $\graphon$ is finitely forcible and
the topological spaces $T(\graphon)$ and $\overline{T}(\graphon)$
contain subspaces homeomorphic to $[0,1]^{\infty}$ equipped with the product topology.
\end{theorem}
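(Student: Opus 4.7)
The plan is to split the proof into three largely independent tasks: (i) give an explicit construction of the \name graphon $\graphon$, (ii) exhibit a topological copy of $[0,1]^{\infty}$ inside $T(\graphon)$ and $\overline{T}(\graphon)$, and (iii) prove that the constructed graphon is finitely forcible. The first two tasks provide the geometric skeleton; the third is where the real technical work lies.

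For the construction, I would build $\graphon$ on $[0,1]$ decomposed into countably many blocks together with auxiliary coordinate-recording parts. Roughly, the graphon should contain a nested sequence of parts $P_1,P_2,P_3,\ldots$, where $P_n$ carries the information of the $n$-th coordinate of a point of $[0,1]^{\infty}$ via a \checkered-type gadget. The individual coordinates must be coupled to a single common vertex parameter, so that a typical vertex $x$ simultaneously sees all of its coordinates $(x_1,x_2,\ldots)$ through its neighborhood profile in each $P_n$. The $\boxplus$ operation appearing in the macros suggests a diagonal/stacked product combining one-dimensional cube gadgets $\cube$ via an iteration described by some ``\recipe'' $\constr$; I would model the overall structure on the one-dimensional Rademacher-style or checker-style constructions used in~\cite{bib-rademacher} and iterate so that block $n$ simulates the $n$-dimensional hypercube $\ccube_n$ inside the same graphon.

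To identify $[0,1]^{\infty}$ inside $T(\graphon)$, I would send each sequence $\xi=(\xi_1,\xi_2,\ldots)\in[0,1]^{\infty}$ to a canonical vertex $v(\xi)$ of $\graphon$ whose neighborhood in the $n$-th coordinate block encodes $\xi_n$ through the \checkered gadget. The map $\xi\mapsto v(\xi)$ should be continuous for both the topology of $T(\graphon)$ (from~\cite{bib-lovasz11+}) and the similarity-distance topology of $\overline{T}(\graphon)$; injectivity is immediate because distinct coordinates yield distinguishable neighborhood profiles, and a compactness argument upgrades continuous injectivity to a homeomorphism onto the image (since $[0,1]^{\infty}$ is compact and $\overline{T}(\graphon)$ is Hausdorff by~\cite[Corollary 13.28]{bib-lovasz-book}, and the $T$-topology is finer). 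A secondary point to verify is that the embedded points are genuinely \emph{typical} vertices, i.e., do not lie in a measure-zero exceptional set; this is arranged by making the coordinate-recording parts have positive measure and by choosing $v(\xi)$ to belong to a product of positive-measure slices.

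The hard part is finite forcibility. My plan is to follow the decorated-constraint methodology developed in~\cite{bib-lovasz08+,bib-rademacher}: design a finite list $\constr$ of subgraph-density equalities that forces, level by level, the block partition, the internal structure of each $P_n$, and the coupling between levels. The key novelty is that finitely many constraints must control infinitely many levels, so one must build a self-referential \recipe\ that, once triggered, propagates to every subsequent level. Concretely, I would design the graphon so that the constraints forcing level $n+1$ from level $n$ are uniform in $n$, and impose a single universal density equation together with a boot gadget fixing the initial level. The bulk of the technical work is to verify that any graphon $W$ matching all these densities admits, up to measure-preserving rearrangement, the same hierarchical decomposition as $\graphon$; this proceeds by induction on the levels, at each step using the constraints to fix the block masses, the internal structure of the blocks, and the bipartite behavior between them. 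The main obstacle I anticipate is ensuring that the self-similarity of the constraints does not admit pathological non-standard solutions (e.g., level-reindexing, merged levels, or ``ghost'' levels not present in $\graphon$); preventing this will require a carefully chosen asymmetry gadget that rigidifies the indexing and breaks any automorphism between levels, so that the induction has a unique starting point and a unique next step at every stage.
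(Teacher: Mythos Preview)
Your high-level plan is aligned with the paper's approach: the paper constructs $\graphon$ as a partitioned graphon, embeds $[0,1]^\infty$ via a distinguished part $D$ whose vertices record infinitely many coordinates through their relative degrees in a sequence of sublevels $B_{2,1},B_{2,2},\ldots$, and proves finite forcibility using the decorated-constraint machinery of~\cite{bib-rademacher} with a uniform inductive scheme (the projection and infinite constraints) that propagates structure across all levels simultaneously.

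There is, however, one structural point where your outline could fail if taken literally. You speak of building $\graphon$ from ``countably many blocks'' and a ``nested sequence of parts $P_1,P_2,P_3,\ldots$''. The partitioned-graphon framework (Lemma~\ref{lemma:partitioned}) requires \emph{finitely} many parts, each identified by a distinct global vertex degree; decorated constraints (Lemma~\ref{lemma:decorated}) are only available once such a finite partition has been forced. An infinite partition cannot be forced this way. The paper therefore uses exactly fourteen parts, and the infinite family of coordinate-carrying levels lives \emph{inside} single parts such as $B_1,B_2$, distinguished not by global degree but by relative degree into an auxiliary part $A_1$ that carries the \checkered\ structure $\varkappa$. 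Your ``self-referential \recipe'' must accordingly be a finite list of decorated constraints that, through the checker gadget, implicitly quantifies over all levels within a fixed part; this is exactly what the paper's auxiliary-\checkered, projection, and infinite constraints accomplish. If you already intended the $P_n$ to be sublevels of a single part rather than parts of the partition, you are on track, but the proposal does not make this distinction, and it is the crux of why finitely many constraints suffice.

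A smaller gap concerns the embedding into $\overline{T}(\graphon)$. Injectivity there is not immediate from ``distinguishable neighborhood profiles'': two distinct elements of $L^1$ can be at similarity distance zero, so your compactness argument needs a genuine lower bound on $d_W(f_x,f_{x'})$ in terms of the coordinate differences. The paper obtains this by integrating against an auxiliary part $A_2$ whose level structure isolates each coordinate; without such a witness part built into the construction, the $\overline{T}$ claim would not follow.
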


\noindent Looking at one of the motivations for studying the dimension of the spaces $T(W)$ and $\overline{T}(W)$,
we remark that every weak $\varepsilon$-regular partition of $\graphon$ has at least $2^{\Theta(\log^2\varepsilon^{-1})}$ parts.
We further discuss
the existence of finitely forcible graphons with no weak $\varepsilon$-regular partition with a small number of parts
in the concluding section.

The proof of Theorem~\ref{thm:main} extends the methods from~\cite{bib-rademacher} and~\cite{bib-norine-comm}.
In particular, Norine~\cite{bib-norine-comm} constructed finitely forcible graphons
with the space of typical vertices of arbitrarily large (but finite) Lebesgue dimension.
In his construction, both $T(W)$ and $\overline{T}(W)$ contain a subspace homeomorphic to $[0,1]^{d}$.
One of the contributions of this paper is showing how the techniques from~\cite{bib-rademacher} and~\cite{bib-norine-comm}
can be refined to force a subspace homeomorphic to $[0,1]^{\infty}$,
which turned out to be quite challenging.
Another contribution of the paper is formalizing the methods used in~\cite{bib-rademacher} and~\cite{bib-norine-comm},
which are further used in the follow up papers~\cite{bib-fup-CKKN,bib-fup-CKM,bib-fup-GKL}.

We finish with giving a brief outline of the proof of Theorem~\ref{thm:main} in informal terms.
As in~\cite{bib-rademacher},
the constructed \name graphon $\graphon$ has several parts (see Figure~\ref{fig:graphon}),
which are determined by the degrees of the vertices that are contained in the parts.
The parts $A_1,\ldots,A_3$ serve to further partition the parts $B_1,\ldots,B_5$ into infinitely many smaller parts;
the part $B_1$ is split into parts $B_{1,d}$, $d\in\NN$.
The structure between the parts $A_1$ and $A_0$ plays the role of identifying the first of the smaller parts and
the structure between $A_1$ and $A_3$ links consecutive smaller parts.
The part $C$ serves to introduce coordinate systems on the parts $A_0,\ldots,A_3$ and $B_1,\ldots,B_5$.
The structure between the parts $B_1$ and $B_2$ provides a $d$-dimensional coordinate system on $B_{1,d}$, $d\in\NN$, and
is used to arrange that $B_{1,d}$ induces a subspace homeomorphic to $[0,1]^d$.
The $d$-dimensional structure of the parts $B_{1,d}$ is forced in an iterative (induction like) way,
increasing the dimension by one at each step.
The proof is concluded by forcing the parts $B_{1,d}$ to be ``projections'' of the part $D$;
in this way, we arrange that the subspace associated with the part $D$ is homeomorphic to $[0,1]^{\infty}$.

\section{Definitions}
\label{sec:def}

In this section we present the notation that we use throughout the paper;
this includes the notions from the theory of graph limits,
which originated in~\cite{bib-borgs06+,bib-borgs08+,bib-borgs+,bib-lovasz06+}.

A {\em graph} is a pair $(V,E)$ where $E\subseteq \binom{V}{2}$.
The elements of $V$ are called {\em vertices} and
the elements of $E$ are called {\em edges}.
All graphs considered in this paper are simple, i.e., without loops and parallel edges.
The {\em order} of a graph $G$ is the number of its vertices and
is denoted by $|G|$.
We use $\nati$ for $\NN\cup \{\infty\}$ and $[k]$ for $\{1,\ldots, k\}$.

The {\em density} of a graph $H$ in a graph $G$, which is denoted by $d(H,G)$,
is the probability that
a random set of $|H|$ distinct vertices of $G$ induce a subgraph isomorphic to $H$.
If $|H|>|G|$, we define $d(H,G)$ to be zero.
A sequence of graphs $(G_i)_{i\in\NN}$ is {\em convergent}
if the sequence $(d(H,G_i))_{i\in\NN}$ converges for every graph $H$.
In general, we will consider sequences of graphs with their orders tending to infinity.

Convergent sequences of graphs can be associated with an analytic limit object,
which we will now introduce.
A {\em graphon\/} $W$ is a symmetric measurable function from $[0,1]^2$ to $[0,1]$.
Here, {\em symmetric\/} stands for the property that $W(x,y)=W(y,x)$ for every $x,y\in [0,1]$.
Very imprecisely speaking,
one can think of a graphon as of a continuous version of the adjacency matrix of a graph.
Mimicking the terminology for graphs,
we refer to a graphon $W$ restricted to $S\times T$, where $S$ and $T$ are two measurable subsets of $[0,1]$,
as to a {\em subgraphon\/} of $W$ induced by $S\times T$.

We next link graphons to convergent sequences of graphs.
A {\em $W$-random graph\/} of order $k$ is
obtained by sampling uniformly and independently $k$ random points $x_1,\ldots, x_k\in [0,1]$,
which are associated with the vertices, and
by joining the vertices corresponding to $x_i$ and $x_j$ by an edge with probability $W(x_i,x_j)$.
Because of this connection, we refer to the points of $[0,1]$ as to the {\em vertices\/} of $W$.
The {\em density} of a graph $H$ in a graphon $W$ is the probability that
the $W$-random graph of order $|H|$ is isomorphic to $H$.
The definition of a $W$-random graph yields the following:
$$d(H,W)=\frac{|H|!}{|\Aut(H)|}\int_{{[0,1]}^{|H|}} \prod_{(i,j)\in E(H)} W(x_i,x_j) \prod_{(i,j)\not\in E(H)} (1-W(x_i,x_j)) \,\;\dif\lambda_{|H|}\;\mbox{,}$$
where $\Aut(H)$ is the automorphism group of $H$.
Our results do not depend on whether we work with Borel or Lebesgue measure on $[0,1]^d$, and
we have made a choice of working with the Borel measure throughout the paper,
which is denoted by $\lambda$ or by $\lambda_d$ if we wish to emphasize the dimension of the support space.
When we talk about the measure on $[0,1]^\NN$, we mean the product measure of the measures on $[0,1]$.

One of the key results in the theory of graph limits asserts~\cite{bib-lovasz06+} that
for every convergent sequence $(G_i)_{i\in\NN}$ of graphs with increasing orders,
there exists a graphon $W$, which is called the {\em limit} of the sequence, such that for every graph $H$,
\[d(H,W)=\lim_{i\to\infty} d(H,G_i)\;\mbox{.}\]
Conversely, if $W$ is a graphon, then the sequence of $W$-random graphs with increasing orders
converges with probability one and its limit is $W$.

Two graphons $W_1$ and $W_2$ are {\em weakly isomorphic} if $d(H,W_1)=d(H,W_2)$ for every graph $H$.
If $\varphi:[0,1]\to[0,1]$ is a measure preserving map,
then the graphon $W^\varphi(x,y):=W(\varphi(x),\varphi(y))$ is always weakly isomorphic to $W$.
The opposite is true in the following sense~\cite{bib-borgs10+}:
if two graphons $W_1$ and $W_2$ are weakly isomorphic,
then there exist measure preserving maps $\varphi_1:[0,1]\to [0,1]$ and $\varphi_2:[0,1]\to [0,1]$
such that $W_1^{\varphi_1}=W_2^{\varphi_2}$ almost everywhere.

The {\em degree} $\deg^{W} x$ of a vertex $x\in [0,1]$ in a graphon $W$ is defined as
$$\deg^{W} x=\int_{[0,1]}W(x,y)\dif y\;\mbox{.}$$
Note that the degree is well-defined for almost every vertex of $W$.
We omit the superscript $W$ whenever the graphon is clear from context. 
Let $A$ be a measurable non-null subset of $[0,1]$.
The {\em relative degree} $\deg^{W}_{A} x$ of a vertex $x\in [0,1]$ with respect $A$ is defined as
$$\deg^{W}_{A} x=\frac{\int_{A}W(x,y)\dif y}{\lambda(A)}\;\mbox{.}$$

Fix a graphon $W$, $x,x'\in [0,1]$ and a measurable set $Y\subseteq [0,1]$.
The set $N_{Y}(x)$ is the set of $y\in Y$ such that $W(x,y)>0$ and 
$$N_{Y}(x\setminus x')=\{y\in Y\ |\ W(x,y)>0\mbox{ and }W(x',y)<1\}.$$
Informally speaking, $N_Y(x\setminus x')$ contains $y\in Y$ such that
a vertex associated with $y$ can be a neighbor of a vertex associated with $x$ and
a non-neighbor of a vertex associated with $x'$ in a $W$-random graph.
We note that, assuming that $Y$ is measurable,
$N_{Y}(x)$ and $N_Y(x\setminus x')$ are measurable
for almost every $x$ and almost every pair $x$ and $x'$, respectively.

As mentioned in the Introduction, the structure and the complexity of a graphon can be studied
by analyzing a topological space associated with its typical vertices~\cite{bib-lovasz10++}.
We now give the formal definitions of the two types of such spaces that we mentioned in the Introduction.
For a graphon $W$ and $x\in [0,1]$, define a function $f^W_x:[0,1]\to [0,1]$ to be
$$f^W_x(y):=W(x,y).$$
Since the function $f^W_x$ belongs to $L^1([0,1])$ for almost every $x\in [0,1]$,
the graphon $W$ naturally defines a probability measure $\mu$ on $L^1([0,1])$.
The space $T(W)$ is formed by the support of the measure $\mu$ equipped with the topology inherited from $L^1([0,1])$.
A vertex $x$ of the graphon $W$ is called {\em typical\/} if $f^W_x\in T(W)$.
Another topological space, which is denoted by $\overline{T}(W)$,
can be defined using the notion of {\em similarity distance\/}.
If $f$ and $g$ are two functions from $L^1([0,1])$, define
\[d_W(f,g):=\int\limits_{[0,1]}\left|\;\int\limits_{[0,1]} W(x,y)(f(y)-g(y))\dif y\right| \dif x\;\mbox{.}\]
Note that the similarity distance $d_W$ depends on the graphon $W$.
The space $\overline{T}(W)$ is formed by the closure (with respect to $d_W$) of the support of $\mu$
equipped with the topology given by the metric $d_W$.
The structure of the space $\overline{T}(W)$ is related to weak regularity partitions of $W$;
in particular, if the Minkowski dimension of $\overline{T}(W)$ is $d$,
then $W$ has a weak $\varepsilon$-regular partition with $O(\varepsilon^{-d})$ parts.
We refer the reader to~\cite[Chapter 13]{bib-lovasz-book} for further details.

\subsection{Finite forcibility}

A graphon $W$ is {\em finitely forcible} if there exist graphs $H_1,\ldots,H_k$ such that
every graphon $W'$ satisfying $d(H_i,W)=d(H_i,W')$ for every $i\in[k]$ is weakly isomorphic to $W$.
For example, the result of Diaconis, Holmes, and Janson~\cite{DiHoJa09}
is equivalent to the statement that the half-graphon $W_{\triangle}(x,y)$,
which is defined as $W_{\triangle}(x,y) = 1$, if $x+y\geq 1$, and $W_{\triangle} = 0$, otherwise,
is finitely forcible.
We refer the reader to~\cite{bib-lovasz11+} for further examples of finitely forcible graphons and
to Section~\ref{sec:concl} for the discussion of some further results on finitely forcible graphons.

Following the framework from \cite{bib-rademacher}, when proving that a graphon is finitely forcible,
we give a set of {\em constraints} that uniquely determines $W$ rather than
listing the finitely many graphs and their densities that uniquely determine $W$.
A {\em constraint} is an equality between two density expressions,
where a {\em density expression} is a formal real polynomial combination of graphs,
i.e., a real number or a graph $H$ are density expressions, and
if $D_1$ and $D_2$ are two density expression,
then the sum $D_1 + D_2$ and the product $D_1\cdot D_2$ are also density expressions.
A graphon $W$ {\em satisfies} a constraint $D_1=D_2$
if both $D_1$ and $D_2$ are equal when evaluated with each $H$ substituted with $d(H,W)$.
As it was observed in~\cite{bib-rademacher},
if a graphon $W$ is a unique (up to weak isomorphism) graphon that
satisfies a finite set $\mathcal{C}$ of constraints,
then the graphon $W$ is finitely forcible.
In particular, $W$ is the unique (up to weak isomorphism) graphon with densities of graphs appearing in $\mathcal{C}$
equal to their densities in $W$. 

In~\cite{bib-rademacher},
it was also observed that a more general form of constraints, called {\em rooted constraints},
can be used to prove that a graphon is finitely forcible.
A graph is rooted
if it has $m$ distinguished vertices labeled with numbers $1,\ldots,m$;
these vertices are referred to as {\em roots} while the other vertices are {\em non-roots}.
Two rooted graphs are {\em compatible}
if the subgraphs induced by their roots are isomorphic through an isomorphism mapping the roots with the same label to each other.
Similarly, two rooted graphs are isomorphic if there exists an isomorphism mapping the $i$-th root of one of them to the $i$-th root of the other;
in particular, if two rooted graphs are isomorphic, then they are compatible.

A {\em rooted density expression} is a formal real polynomial combination of compatible rooted graphs.
We next describe how constraints formed by rooted expressions are interpreted.
Consider a graphon $W$ and a rooted graph $H$ with $m$ roots, and let $H_0$ be the graph induced by the $m$ roots of $H$.
We define the auxiliary function $c_H:[0, 1]^m \rightarrow [0,1]$;
the value of $c_H(x_1,\ldots,x_m)$ is equal to the probability that
a $W$-random graph is isomorphic to $H$ conditioned on the $m$ roots being associated with $x_1,\ldots,x_m$ (in this order), i.e.,
\begin{center}
\scalebox{0.80}{
$c_H(x_1,\ldots,x_m)=\frac{(|H|-m)!}{|\Aut(H)|}\int\limits_{(x_{m+1},\ldots,x_{|H|})\in [0,1]^{|H|-m}}\prod\limits_{(i,j)\in E(H)}W(x_i,x_j)
\prod\limits_{(i,j)\not\in E(H)}\left(1-W(x_i,x_j)\right)\dif\lambda_{|H|-m}\mbox{,}$}
\end{center}
where $\Aut(H)$ is the group of automorphisms of $H$ that preserves the roots, and
the vertices of $H$ are numbered in a way that the first $m$ vertices are the roots (in the order that they have).

Let $D=D'$ be a constraint such that $D$ and $D'$ are compatible rooted density expressions with graphs containing $m$ roots.
For every graph $H$ appearing in $D$ and $D'$, substitute the function $c_H$;
both $D$ and $D'$ can now be viewed as functions $c_D$ and $c'_D$ from $[0,1]^m$ to $[0,1]$.
We say that the graphon $W$ {\em satisfies} the constraint $D=D'$
if the functions $c_D$ and $c'_D$ are equal almost everywhere.
We comment that, on several occasions, we consider constraints containing a fraction of two rooted density expressions $D/D'$.
A constraint containing such fractions should be understood as saying that
both sides are multiplied by the denominators of all the fractions,
e.g., $D_1/D'_1=D_2/D'_2$ should be understood as $D_1\cdot D_2'= D_2\cdot D_1'$.
One of the results in~\cite{bib-rademacher} asserts that
for every two compatible rooted density expressions $D$ and $D'$,
there exist density expressions $C$ and $C'$ such that
a graphon $W$ satisfies $D=D'$ if and only if it satisfies $C=C'$.

A graphon $W$ is {\em partitioned}
if there exist $k\in\NN$, positive reals $a_1,\ldots,a_k$ summing to one and distinct reals $d_1,\ldots,d_k$ between $0$ and $1$ such that
the set of vertices of $W$ with degree $d_i$ has measure $a_i$;
we write $A_i$ for the set of vertices of degree $d_i$ for $i\in [k]$ and refer to $A_i$ as to a {\em part} of the graphon $W$.

A graph $H$ is {\em decorated} if its vertices are labeled with parts $A_1,\ldots,A_k$.
The density of a decorated graph $H$ in a graphon $W$
is the probability that the $W$-random graph is the graph $H$ 
conditioned on the event that all sampled vertices are in the parts corresponding to their labels. 
For example, if $H$ is an edge with its two vertices labeled with parts $A_1$ and $A_2$,
then the density of $H$ in $W$ is the density of edges between the parts $A_1$ and $A_2$, i.e.,
$$d(H,W)=\frac{1}{\lambda(A_1)\lambda(A_2)}\int_{A_1}\int_{A_2}W(x,y)\,\dif x\,\dif y\;\mbox{.}$$
Similarly as in the case of non-decorated graphs,
we can define rooted decorated graphs, rooted decorated density expressions and form constraints using such expressions.
A constraint that uses (rooted or non-rooted) decorated graphs is referred to as {\em decorated}.
One of the results from~\cite{bib-rademacher}, which we state as Lemma~\ref{lemma:decorated},
asserts that for every decorated constraint, there exists an equivalent ordinary constraint.

The structural properties of a graphon $W$ that satisfies a given set of constraints can be analyzed in several different ways.
The constraints of the form $D=0$ where $D$ is a single graph $G$
can be understood as forbidding $G$ as a subgraph in a $W$-random graph.
Consequently, the induced removal lemma and other combinatorial arguments
can be used to derive some structural properties of every graphon satisfying $D=0$.
However, it is also possible to derive properties of such graphons in an analytic way,
which is the way that we will generally use in our exposition.

We next introduce the convention for depicting decorated constraints used throughout the paper;
an example of the use of this convention can be found in Figure~\ref{fig:XxE}.
The roots of decorated graphs will be depicted by squares and non-root vertices by circles;
all vertices will be labeled by the names of the corresponding parts of a graphon.
The full lines connecting vertices correspond to edges and dashed lines to non-edges.
No connection between a pair of vertices represents that both edge or non-edge are allowed between the vertices,
i.e.,
the corresponding density expression should be understood as the sum of the expressions containing
the graph with and without such the edge (unless the edge is missing between two roots).
For example, if three pairs of vertices are missing a connection,
the density expression is the sum of all eight graphs that can be obtained by including or not including the edge between the three pairs.
If the edge is missing between two roots, then the density constraint is required to hold
both when the edge is included between the pair of root vertices in all graphs and when it is included in no graph.
To avoid any possible ambiguity with interpretations of the drawings of rooted constraints,
the positions of the roots of all graphs appearing in a rooted decorated density constraint will always be identical (see Figure~\ref{fig:distr} for an example).

We conclude this section by explicitly stating three lemmas that were proven in~\cite{bib-rademacher} and that we use further.
The first lemma guarantees the existence of a set of constraints that
force a graphon satisfying these constraints to be a partitioned graphon with a given partition and given degrees.
\begin{lemma}\label{lemma:partitioned}
\label{lemma:partition}
Let $k\in\NN$, $a_1,\ldots,a_k$ be positive real numbers summing to one and
let $d_1,\ldots$, $d_k$ be distinct reals between $0$ and $1$.
There exists a finite set of constraints $\mathcal{C}$ such that
a graphon $W$ satisfies $\mathcal{C}$ if and only if
$W$ is a partitioned graphon with $k$ parts such that the $i$-th part has measure $a_i$ and its vertices have degree $d_i$.
\end{lemma}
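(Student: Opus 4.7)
The plan is to specify the degree distribution of $W$ by moment constraints. The key observation is that the $t$-th moment of the degree function,
$$\int_{[0,1]}\bigl(\deg^{W} x\bigr)^{t}\dif x=\int_{[0,1]^{t+1}}\prod_{j=1}^{t}W(x,y_{j})\dif x\,\dif y_{1}\cdots\dif y_{t},$$
is the homomorphism density of the star $K_{1,t}$ in $W$, and homomorphism densities can be written as fixed linear combinations of induced subgraph densities of graphs on $t+1$ vertices. Hence every polynomial expression in the quantities $\int(\deg^{W}x)^{t}\dif x$ is a bona fide density expression and may appear in a constraint.

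First I would force the support of the degree distribution. Consider the polynomial $P(z)=\prod_{i=1}^{k}(z-d_{i})^{2}$, which is nonnegative on $[0,1]$ and vanishes precisely on $\{d_{1},\ldots,d_{k}\}$. The integral $\int_{[0,1]}P(\deg^{W}x)\dif x$ expands into a polynomial in the moments $\int(\deg^{W}x)^{t}\dif x$ with $0\le t\le 2k$, so the equality
$$\int_{[0,1]}P(\deg^{W}x)\dif x=0$$
is a single constraint in the allowed form. Because the integrand is nonnegative, this constraint forces $\deg^{W}x\in\{d_{1},\ldots,d_{k}\}$ for almost every $x\in[0,1]$. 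Writing $A_{i}:=\{x:\deg^{W}x=d_{i}\}$ and $w_{i}:=\lambda(A_{i})$, the sets $A_{1},\ldots,A_{k}$ partition $[0,1]$ up to a null set.

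Next I would pin down the sizes $w_{i}=a_{i}$ by prescribing the remaining low moments. For each $t\in\{1,\ldots,k-1\}$ add the constraint
$$\int_{[0,1]}\bigl(\deg^{W}x\bigr)^{t}\dif x=\sum_{i=1}^{k}a_{i}d_{i}^{t},$$
which, together with the automatic identity $\sum_{i}w_{i}=1=\sum_{i}a_{i}$, yields the linear system $\sum_{i=1}^{k}w_{i}d_{i}^{t}=\sum_{i=1}^{k}a_{i}d_{i}^{t}$ for $t=0,1,\ldots,k-1$. The coefficient matrix $(d_{i}^{t})$ is a $k\times k$ Vandermonde matrix, which is invertible since the $d_{i}$ are distinct, so $w_{i}=a_{i}$ for every $i$. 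Collecting the constraint from the previous paragraph and these $k-1$ moment constraints yields the required finite set $\mathcal{C}$.

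There is no serious obstacle: the only point that needs care is the translation of $\int(\deg^{W}x)^{t}\dif x$ into a density expression, which is the standard reduction from homomorphism to induced densities. Everything else is elementary algebra (nonnegativity of $P$ together with the Vandermonde argument). Hence the lemma follows.
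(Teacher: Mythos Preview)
Your argument is correct. The paper does not actually prove this lemma; it is quoted from~\cite{bib-rademacher} and used as a black box, so there is no proof in the paper to compare against. Your approach---forcing $\int_{[0,1]}\prod_{i}(\deg^{W}x-d_{i})^{2}\,\dif x=0$ via star homomorphism densities and then pinning the part sizes with a Vandermonde system on the low moments---is the standard one and is essentially how the result is established in~\cite{bib-rademacher}. The only cosmetic remark is that your translation of $\int(\deg^{W}x)^{t}\dif x$ into a density expression goes through the homomorphism density $t(K_{1,t},W)$, which in the paper's language (induced densities $d(H,W)$) becomes a fixed linear combination of $d(H,W)$ over all graphs $H$ on $t+1$ vertices; you note this, and it is routine.
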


The following lemma says that decorated constraints have the same expressing power as non-decorated constraints.
\begin{lemma}\label{lemma:decorated}
Let $k\in\NN$, let $a_1,\ldots,a_k$ be positive real numbers summing to one, and
let $d_1,\ldots,d_k$ be distinct reals between zero and one.
Further, let $D_1$ and $D_2$ be two compatible rooted decorated density expressions with decorations $A_1,\ldots,A_k$.
There exist an ordinary density expression $D$, i.e., $D$ has no roots and no decorations, such that
every partitioned graphon $W$ with $k$ parts formed by vertices of degree $d_i$ and measure $a_i$ each
satisfies $D_1=D_2$ if and only if it satisfies $D=0$.
\end{lemma}
We remark that our definition of interpreting decorated density expressions differ from the definition given  in~\cite{bib-rademacher}.
However, the difference results only in a constant multiplicative factor depending on the measures of the parts of a graphon;
in particular, Lemma~\ref{lemma:decorated} also holds with the definition of decorated constraints that we use.

The last lemma states that there exists a finite set of constraints guaranteeing that
a partitioned graphon is constant between a specific pair of its parts.

\begin{lemma}\label{lemma:pseudorandom} 
For all $k\in \mathbb{N}$, positive reals $a_1, \ldots, a_k$ summing to one, distinct reals $d_1, \ldots, d_k$ between zero and one,
$\ell,\ell'\leq k$, $l\not=l'$, and $p\in[0,1]$,
there exists a finite set of constraints $\mathcal{C}$ such that
every partitioned graphon $W$ with $k$ parts $A_1,\ldots,A_k$ such that the measure of $A_i$ is $a_i$ and
all vertices of $A_i$ have degrees $d_i$
satisfies $\mathcal{C}$ if and only if $W(x,y)=p$ for almost every $x \in A_\ell$ and $y\in A_{\ell'}$.
\end{lemma}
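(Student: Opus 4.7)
The plan is to augment $\mathcal{C}$ with two decorated constraints: (i) the decorated edge with endpoints labelled $A_l$ and $A_{l'}$ has density $p$, and (ii) the decorated bipartite four-cycle with two non-adjacent vertices in $A_l$ and the other two in $A_{l'}$ (within-part pairs left unspecified, as allowed by the drawing convention) has density $p^4$. By Lemma~\ref{lemma:decorated} both decorated constraints are equivalent to non-decorated, non-rooted constraints, so the resulting $\mathcal{C}$ is a finite set of constraints of the required form.

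It then remains to verify that any partitioned graphon $W$ satisfying these constraints must satisfy $W=p$ almost everywhere on $A_l\times A_{l'}$; this is the classical Chung--Graham--Wilson equivalence for bipartite pseudorandomness. Writing $\alpha:=a_l$, $\beta:=a_{l'}$,
\[
c(x_1,x_2) := \int_{A_{l'}} W(x_1,y)W(x_2,y)\,\dif y
\qquad\text{and}\qquad
D(y) := \int_{A_l} W(x,y)\,\dif x,
\]
constraint (i) becomes $\int_{A_{l'}} D = p\alpha\beta$ and constraint (ii) becomes $\int_{A_l^2} c^2\,\dif\lambda_2 = p^4\alpha^2\beta^2$. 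Two successive applications of the Cauchy--Schwarz inequality then give
\[
p^4\alpha^2\beta^2 \;=\; \int_{A_l^2} c^2 \;\geq\; \frac{1}{\alpha^2}\Bigl(\int_{A_l^2} c\Bigr)^{\!2} \;=\; \frac{1}{\alpha^2}\Bigl(\int_{A_{l'}} D^2\Bigr)^{\!2} \;\geq\; \frac{1}{\alpha^2\beta^2}\Bigl(\int_{A_{l'}} D\Bigr)^{\!4} \;=\; p^4\alpha^2\beta^2,
\]
forcing equality throughout, hence $D\equiv p\alpha$ almost everywhere on $A_{l'}$ and $c\equiv p^2\beta$ almost everywhere on $A_l^2$. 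By the symmetry of constraint (ii) in the two sides, the analogous statement $\phi(x):=\int_{A_{l'}}W(x,y)\,\dif y\equiv p\beta$ holds a.e.\ on $A_l$ as well.

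Testing $c\equiv p^2\beta$ against $\mathbf{1}_S(x_1)\mathbf{1}_S(x_2)$ for a measurable $S\subseteq A_l$ gives
\[
\int_{A_{l'}}\Bigl(\int_S W(x,y)\,\dif x\Bigr)^{\!2}\,\dif y \;=\; p^2\beta\,\lambda(S)^2,
\]
while $\int_{A_{l'}}\int_S W\,\dif x\,\dif y = \int_S \phi = p\beta\,\lambda(S)$. These two identities saturate the Cauchy--Schwarz inequality $(\int f)^2 \leq \beta\int f^2$ applied to $f(y):=\int_S W(x,y)\,\dif x$, so $f$ is almost everywhere constant and equal to $p\,\lambda(S)$. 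Applying this to a countable generating family of sets $S_n\subseteq A_l$ and invoking Fubini then yields $W(x,y)=p$ for almost every $(x,y)\in A_l\times A_{l'}$, as required. The only point where some care is needed is the translation of the two decorated densities into the displayed integrals with the correct normalisation, which is handled uniformly by Lemma~\ref{lemma:decorated}; beyond this, the argument is entirely standard and presents no real obstacle.
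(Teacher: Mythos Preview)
Your argument is correct. The paper does not prove this lemma itself but cites it from \cite{bib-rademacher}; the route one can infer from the paper's own treatment of the analogous situation on $X\times E_1$ (and from Figure~\ref{fig:distinguish}) uses two \emph{rooted} decorated constraints---fixing $\int_{A_{l'}} W(x,z)\,\dif z$ and the cherry moment $\int_{A_{l'}} W(x,z)W(x',z)\,\dif z$ for almost every choice of roots $x,x'\in A_l$---and then passes to the diagonal $x=x'$ via the reasoning of \cite[Lemma~3.3]{bib-lovasz11+} before a single application of Cauchy--Schwarz in the $y$-variable. You instead take the non-rooted $C_4$ count in the Chung--Graham--Wilson style and bypass the diagonal step with the generating-family argument at the end. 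The two approaches are essentially equivalent (your $\int_{A_l^2} c^2$ is the integrated square of the cherry moment), but yours is a little more self-contained: it avoids the external appeal needed to justify setting $x=x'$, trading it for one extra Cauchy--Schwarz layer and a standard $\pi$-system argument.
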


\section{The \name graphon}

In this section, we define the graphon from Theorem~\ref{thm:main};
the graphon is denoted by $\graphon$ and referred to as the {\em \name graphon}.
For convenience, we provide a sketch of the structure of the graphon $\graphon$ in Figure~\ref{fig:graphon}.

\begin{figure}
\centering
\includegraphics[scale=1]{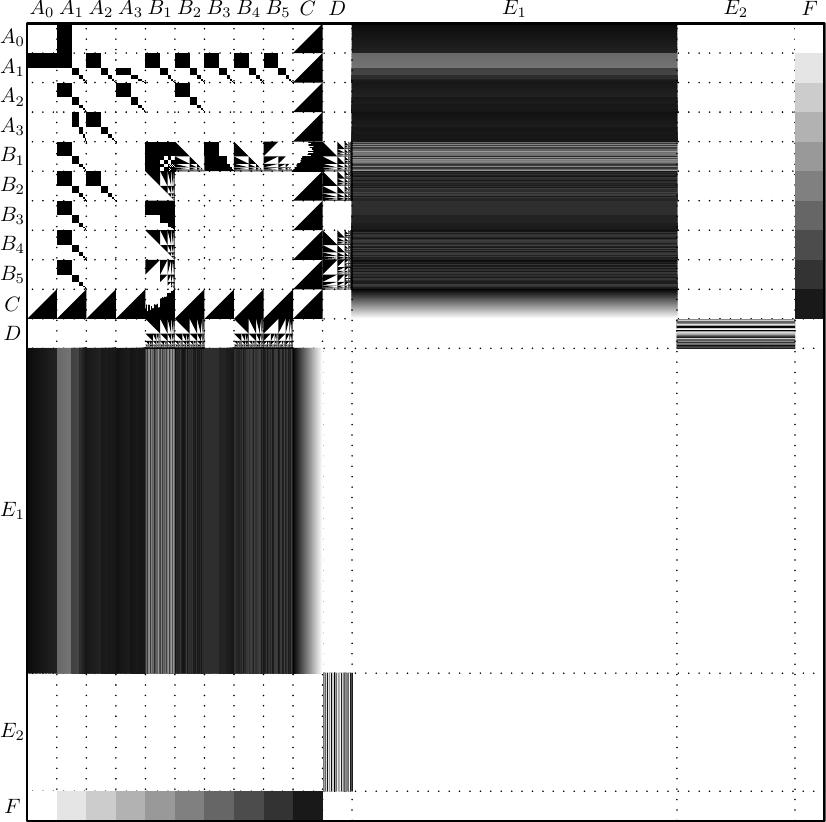}
\caption{The \name graphon. The origin of the coordinate system is in the top left corner; the values of the graphon are visualized using different shades of gray (with white being zero and black being one). The graphon between the parts $X$ and $Y$, $X\in\{\B_1,\D\}$ and $Y\in\{\B_1, \B_2, \B_4, \B_5\}$, is drawn in an imprecise simplified way because of the complex structure. To simplify the picture, the parts are labeled by their names without the superscripts.}
\label{fig:graphon}
\end{figure}
 
The {\em \name graphon} $\graphon$ is a partitioned graphon with $14$ parts,
which are denoted by $\A_0, \ldots, \A_3, \B_1,\ldots, \B_5,\C,\D,\E_1,\E_2,\F$.
Each part has measure $1/27$ except for the parts $\E_1$ and $\E_2$ that have measure $11/27$ and $4/27$, respectively.
The degrees of the vertices in the parts are listed in Table~\ref{tab:degrees}.
We will not compute the exact values $e_1$ and $e_2$ of the degrees of vertices in $\E_1$ and $\E_2$, respectively;
however, the definition of the graphon will imply that $e_1\in (4.5/27,10/27)$ and $e_2<1/27$.
In particular, vertices in different parts have different degrees.
The high level overview of the roles of individual parts of the \name graphon
can be found at the end of Section~\ref{sec:intro}.

\begin{table}
\begin{center}
\scalebox{0.90}{
\begin{tabular}{| l |>{$}c<{$}|>{$}c<{$}|>{$}c<{$}|>{$}c<{$}|>{$}c<{$}|>{$}c<{$}|>{$}c<{$}|>{$}c<{$}|>{$}c<{$}|>{$}c<{$}|>{$}c<{$}|>{$}c<{$}|>{$}c<{$}|>{$}c<{$}|}
    \hline
    part & \A_0 & \A_1 & \A_2 & \A_3 & \B_1 & \B_2 & \B_3 & \B_4 & \B_5 & \C & \D & \E_1 & \E_2 & \F \\ \hline 
    degree &  \frac{110}{270} &\frac{111}{270} & \frac{112}{270} & \frac{113}{270} & \frac{114}{270} & \frac{115}{270} & \frac{116}{270} & \frac{117}{270} & \frac{118}{270} & \frac{119}{270} & \frac{40}{270} & e_1 & e_2 & \frac{45}{270} \\
    \hline
\end{tabular}
}
\end{center}
\caption{The degrees of the vertices in the parts of the graphon $\graphon$.}
\label{tab:degrees}
\end{table}

We describe the graphon $\graphon$ as a collection of functions $\graphon^{X\times Y}$ on products of the parts $\X$ and $\Y$.
To simplify our exposition,
we define these as functions from $[0,1]^2$ to $[0,1]$,
assuming that we have a fixed measurable bijection $\eta_X$ from each part $\X$ to $[0,1]$ such that
$\lambda\left(\eta^{-1}_X(S)\right)=\lambda(S)\lambda(\X)$ for every measurable set $S\subseteq [0,1]$.
So, it holds $\graphon(x,y)=\graphon^{X\times Y}(\eta_X(x),\eta_Y(y))$ for $x\in\X$ and $y\in\Y$,
i.e., the graphon $\graphon$ consists of appropriately scaled functions $\graphon^{X\times Y}$.
Note that, unlike graphons,
the functions $\graphon^{X\times Y}$ need not to be symmetric if $X\not=Y$;
however, these functions satisfy $\graphon^{X\times Y}(x,y)=\graphon^{Y\times X}(y,x)$.

We now introduce additional notation used in the definition of the graphon $\graphon$ and in the proof.
For $x\in [0,1)$, let $\twoblock{x}$ be such $k\in\NN$ that $x\in [1-2^{-k+1},1-2^{-k})$ and
let $\re{x}=(x-(1-2^{-k+1}))\cdot 2^{k}$.
Informally speaking, we imagine $[0,1]$ as partitioned into consecutive intervals of measures $1/2$, $1/4$, etc., and
$\twoblock{x}$ indicates the index of the interval that $x$ belongs to and
$\re{x}$ is the relative position of $x$ within this interval.
Observe that $x=1-2^{1-\twoblock{x}}+\re{x}/2^{\twoblock{x}}$ for every $x\in [0,1)$.
Using this notation,
we define the {\em \checkered} function $\varkappa:[0,1]^2\rightarrow [0,1]$ as follows (see Figure~\ref{fig:ch}):
$$\varkappa(x,y)=
\left\{ \begin{array}{rl}
1 & \mbox{if }  \twoblock{x}=\twoblock{y}\\ 
0 & \mbox{otherwise.} \\
\end{array}\right.$$

\begin{figure}[b]
\centering
\includegraphics{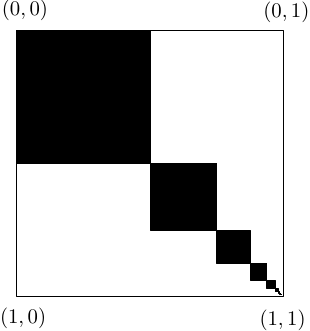}
\caption{The \checkered\ function $\varkappa$.}
\label{fig:ch}
\end{figure}

We are now ready to start with defining the structure between different parts of the graphon $\graphon$.

\begin{description}

\item[$\graphon^{A_0\times A_1}(x,y)=$]
$\left\{ \begin{array}{rl}
1 & \mbox{for } (x,y)\in [0,1]\times[0,1/2]\mbox{, and}\\ 
0 & \mbox{otherwise.} \\
\end{array}\right.$

\item[$\graphon^{A_1\times A_1}=$]
$\graphon^{A_1\times A_2}=\graphon^{A_1\times B_1}=\graphon^{A_1\times B_2}=\graphon^{A_1\times B_3}=\graphon^{A_1\times B_4}=\graphon^{A_1\times B_5}$
$=\graphon^{A_2\times A_3}=\graphon^{A_2\times B_2}=\varkappa.$

\end{description}

For $X\in\{ A_0, \ldots, A_3, B_2,\ldots, B_5, C\}$, let:

\begin{description}
\item[$\graphon^{C\times X}(x,y)=$]
$\left\{ \begin{array}{rl}
1 & \mbox{for } x+y\geq 1\mbox{, and}\\ 
0 & \mbox{otherwise.} \\
\end{array}\right.$
\end{description}

The rest of the definition of the graphon $\graphon$ depends on a collection of measure preserving functions,
which we call a \recipe.
A {\em \recipe} $\ccube$ is a set of measure preserving maps $\cube_n$ for $n\in\nati$ such that
$\cube_n:[0,1]\rightarrow [0,1]^{n}$.
Recall that $\nati=\NN\cup\{\infty\}$ and so
we understand $[0,1]^\infty$ to be $[0,1]^\NN$.
An example of a recipe is a collection of maps that ``zip'' the standard binary representations of $x_i$,
i.e., the digits of $\cube_n(x)$ on the positions congruent to $i$ modulo $n$ are determined by the digits of $x_i$, $i\in [n]$, and
the digits of $\cube_{\infty}(x)$ on the positions congruent to $2^{i-1}$ modulo $2^i$ are determined by the digits of $x_i$, $i\in\NN$.
Observe that $\ccube=\{\cube_{n}|n\in \nati\}$ is a recipe if and only if
\begin{equation}
  \lambda\left(\{x|\forall\ i\in[n]\ (\cube_{n}(x))_i\leq z_i\}\right)=\prod_{i=1}^n z_i\mbox{ for every }(z_1,\dots z_n)\in[0,1]^n
\label{eq2a}
\end{equation}
for every $n\in\NN$ and 
\begin{equation}
  \lambda\left(\{x|\forall\ i\in[k]\ (\cube_{\infty}(x))_i\leq z_i\}\right)=\prod_{i=1}^k z_i\mbox{ for every }(z_1,\dots z_k)\in[0,1]^k
\label{eq2b}
\end{equation}
for every $k\in\NN$, where $(x)_i$ is the $i$-th coordinate of $x\in [0,1]^n$, $n\in\nati$.
A \recipe\ is {\em bijective} if all the maps $\cube_n$, $n\in\nati$, are bijective.

For the rest of the definition of the graphon $\graphon$, we fix a bijective \recipe\ $\ccube$.
It can be shown that the definition of $\graphon$ does not depend on this choice in the sense that 
the graphons defined for different choices of $\ccube$ are weakly isomorphic (this statement stays
true even if $\ccube$ is a \recipe\ that is not bijective).

\begin{description}

\item[$\graphon^{A_1\times A_3}(x,y)=$]
$\left\{ \begin{array}{rl}
1 & \mbox{if } \twoblock{x}=\twoblock{y}+1\mbox{, and}\\
0 & \mbox{otherwise.} \\
\end{array}\right.$

\item[$\graphon^{C\times B_1}(x,y)=$]
$\left\{ \begin{array}{rl}
1 & \mbox{for } (1-2^{1-\twoblock{y}})+(\cube_{\twoblock{y}}(\re{y}))_{1}\cdot 2^{-\twoblock{y}}+x\geq 1\mbox{, and}\\ 
0 & \mbox{otherwise.} \\
\end{array}\right.$

\item[$\graphon^{B_1\times B_1}(x,y)=$]
$\left\{ \begin{array}{rl}
1 & \mbox{if }(\cube_{\twoblock{x}}(\re{x}))_{k} \leq (\cube_{\twoblock{y}}(\re{y}))_{k} \mbox{ for every } k\leq\min (\twoblock{x},\twoblock{y} ),\\
1 & \mbox{if }(\cube_{\twoblock{x}}(\re{x}))_{k} \geq (\cube_{\twoblock{y}}(\re{y}))_{k} \mbox{ for every } k\leq\min (\twoblock{x},\twoblock{y} )\mbox{, and}\\
0 & \mbox{otherwise.} \\
\end{array}\right.$

\item[$\graphon^{B_1\times B_2}(x,y)=$]
$\left\{ \begin{array}{rl}
1 & \mbox{if } \twoblock{x}\geq\twoblock{y} \mbox{ and } \re{y} \leq (\cube_{\twoblock{x}}(\re{x}))_{\twoblock{y}}\mbox{, and}\\ 
0 & \mbox{otherwise.} \\
\end{array}\right.$

\item[$\graphon^{B_1\times B_3}(x,y)=$]
$\left\{ \begin{array}{rl}
1 & \mbox{if } \twoblock{x}\geq\twoblock{y}\mbox{, and}\\ 
0 & \mbox{otherwise.} \\
\end{array}\right.$

\item[$\graphon^{B_1\times B_4}(x,y)=$]
$\left\{ \begin{array}{rl}
1 & \mbox{if } \twoblock{x}\geq\twoblock{y} \mbox{ and } \re{y} \leq \prod\limits_{i=1}^{\twoblock{y}}(\cube_{\twoblock{x}}(\re{x}))_{i}\mbox{, and}\\ 
0 & \mbox{otherwise.} \\
\end{array}\right.$

\item[$\graphon^{B_1\times B_5}(x,y)=$]
$\left\{ \begin{array}{rl}
1 & \mbox{if } \twoblock{x}\geq\twoblock{y} \mbox{ and } \re{y} \leq \prod\limits_{i=1}^{\twoblock{y}}(1-(\cube_{\twoblock{x}}(\re{x}))_{i})\mbox{, and}\\ 
0 & \mbox{otherwise.} \\
\end{array}\right.$

\item[$\graphon^{D\times B_1}(x,y)=$]
$\left\{ \begin{array}{rl}
1 & \mbox{if }\cube_{\infty}(\re{y})_k\leq(\cube_{\infty}(x))_{k}\mbox{ for every }\ k\leq \twoblock{y}\mbox{, and}\\ 
0 & \mbox{otherwise.} \\
\end{array}\right.$

\item[$\graphon^{D\times B_2}(x,y)=$]
$\left\{ \begin{array}{rl}
1 & \mbox{if } \re{y} \leq (\cube_{\infty}(x))_{\twoblock{y}}\mbox{, and}\\ 
0 & \mbox{otherwise.} \\
\end{array}\right.$

\item[$\graphon^{D\times B_4}(x,y)=$]
$\left\{ \begin{array}{rl}
1 & \mbox{if } \re{y} \leq \prod\limits_{i=1}^{\twoblock{y}}(\cube_{\infty}(x))_{i}\mbox{, and}\\ 
0 & \mbox{otherwise.} \\
\end{array}\right.$

\item[$\graphon^{D\times B_5}(x,y)=$]
$\left\{ \begin{array}{rl}
1 & \mbox{if } \re{y} \leq \prod\limits_{i=1}^{\twoblock{y}}(1-(\cube_{\infty}(x))_{i})\mbox{, and}\\ 
0 & \mbox{otherwise.} \\
\end{array}\right.$
\end{description}

\noindent For every $X\in\{ A_0, \ldots, A_3, B_1, \ldots, B_5, C\}$, we set:

\begin{description}
\item[$\graphon^{E_1\times X}(x,y)=$]
$1-1/11\sum\limits_{Y\in \A_0, \ldots, \A_3, \B_1,\ldots, \B_5, \C, \D} \deg_{Y} y.$
\end{description}

\noindent We further define
\begin{description}
\item[$\graphon^{E_2\times D}(x,y)=$]
$1-1/4\sum\limits_{Y\in \B_1,\B_2,\B_4, \B_5} \deg_{Y} y .$

\item[$\graphon^{F\times A_1}(x,y)=1/10$] for all $(x,y)\in [0,1]^2$,
\item[$\graphon^{F\times A_2}(x,y)=2/10$] for all $(x,y)\in [0,1]^2$,
\item[$\graphon^{F\times A_3}(x,y)=3/10$] for all $(x,y)\in [0,1]^2$,
\item[$\graphon^{F\times B_1}(x,y)=4/10$] for all $(x,y)\in [0,1]^2$,
\item[$\graphon^{F\times B_2}(x,y)=5/10$] for all $(x,y)\in [0,1]^2$,
\item[$\graphon^{F\times B_3}(x,y)=6/10$] for all $(x,y)\in [0,1]^2$,
\item[$\graphon^{F\times B_4}(x,y)=7/10$] for all $(x,y)\in [0,1]^2$,
\item[$\graphon^{F\times B_5}(x,y)=8/10$] for all $(x,y)\in [0,1]^2$, and
\item[$\graphon^{F\times C}(x,y)=9/10$] for all $(x,y)\in [0,1]^2$.
\end{description}

If we have defined a function $\graphon^{X\times Y}$,
we set $\graphon^{Y\times X}(x,y)=\graphon^{X\times Y}(y,x)$.
Finally,
the graphon $\graphon$ is equal to 0 between parts $\X$ and $\Y$ such that
we have not defined a function $\graphon^{X\times Y}$ or $\graphon^{Y\times X}$.
This completes the definition of the graphon $\graphon$.

We now argue that $e_1\in (4.5/27,10/27)$ and $e_2<1/27$.
Let $x\in\E_1$.
Since $N(x)$ is a subset of $\A_0\cup\dots\cup\A_3\cup\B_1\cup\cdots\cup\B_5\cup\C$,
the measure of $N(x)$ is at most $10/27$.
Since it does not hold that $\graphon(x,y)=1$ for almost all $y\in N(x)$,
we get that $e_1<10/27$.
Observe that it holds for every $X\in\{\A_0,\ldots,\A_3,\B_2,\ldots,\B_5,\C\}$ that
$\deg_{\E_1}(x)>1/2$ for every $x\in X$.
It follows that $e_1>4.5/27$.
Similarly, $N(x)$ is a subset of $\D$ for every $x\in\E_2$ and
it does not hold that $\graphon(x,y)=1$ for almost all $y\in N(x)$;
this implies that $e_2<1/27$.

Before proceeding further,
we introduce additional notation related to splitting parts $A_i$, $i\in\{1,2,3\}$, and
$B_j$, $j\in\{1,\ldots,5\}$, into smaller pieces.
For $i\in\{1,2,3\}$, the set of vertices $x\in\A_i$ with $\deg_{\A_1} x=2^{-k}$ is denoted by $\A_{i,k}$ and
$\A_{i,k}$ is called the {\em $k$-th level} of $\A_i$.
Similarly, $\B_{j,k}$, $j\in\{1,\ldots,5\}$, is the set of vertices $x\in\B_j$ such that $\deg_{\A_1} x=2^{-k}$.
Note that measure of the $k$-th level $\A_{i,k}$ is $2^{-k}/27$; the same holds for $\B_{j,k}$.

\subsection{Dimension of the space of typical vertices}

We finish this section with showing that both $T(\graphon)$ and $\overline{T}(\graphon)$ have infinite dimension.

\begin{proposition}
\label{prop:infinite}
Both $T(\graphon)$ and $\overline{T}(\graphon)$ contain a subspace homeomorphic to $[0,1]^{\infty}$.
\end{proposition}

\begin{proof}
Observe that every vertex contained in $\D$ is typical (both with respect to $T(\graphon)$ and
with respect to $\overline{T}(\graphon)$) and
define a map $h:\D\to [0,1]^\infty$ as
$$h(x)=\left(\deg^{\graphon}_{\B_{2,i}}x\right)_{i\in\NN}\;\mbox{.}$$
Because $\cube_{\infty}$ is a bijection,
$h$ is a bijection between $\D$ and $[0,1]^\infty$.
We next show that $h^{-1}$ is continuous when $\D$ is equipped with the topology of the space $T(\graphon)$.
To do so, we need to bound the $L^1$-distance of the functions $f^{\graphon}_x$ and $f^{\graphon}_{x'}$
in terms of $h(x)$ and $h(x')$ for all $x,x'\in\D$, where $f^{\graphon}_x(y):=\graphon(x,y)$.

First note that 
\[\deg^{\graphon}_{\B_{1,i}}x=\deg^{\graphon}_{\B_{4,i}}x=\prod_{k\in[i]}\deg^{\graphon}_{\B_{2,k}}x\quad \mbox{ and}\quad
  \deg^{\graphon}_{\B_{5,i}}x=\prod_{k\in[i]}(1-\deg^{\graphon}_{\B_{2,k}}x)\]
for every $x\in\D$.
The value of $||f^{\graphon}_x-f^{\graphon}_{x'}||_1$ is the sum of
the corresponding integrals over $y$ from $\B_1$, $\B_2$, $\B_4$, $\B_5$ and $\E_2$.
The term corresponding to the integral over $y$ from $\B_2$ is equal to
\[\sum_{i=1}^\infty\lambda(\B_{2,i})\left|\deg^{\graphon}_{\B_{2,i}}x-\deg^{\graphon}_{\B_{2,i}}x'\right|\;\mbox{,}\]
the term corresponding to the integral over $y$ from $\B_4$ is equal to
\[\sum_{i=1}^\infty\lambda(\B_{4,i})\left|\prod_{k=1}^{i}\deg^{\graphon}_{\B_{2,k}}x-\prod_{k=1}^{i}\deg^{\graphon}_{\B_{2,k}}x'\right|\;\mbox{,}\]
and the term corresponding to the integral over $y$ from $\B_5$ is equal to
\[\sum_{i=1}^\infty\lambda(\B_{5,i})\left|\prod_{k=1}^{i}(1-\deg^{\graphon}_{\B_{2,k}}x)-\prod_{k=1}^{i}(1-\deg^{\graphon}_{\B_{2,k}}x')\right|\;\mbox{.}\]
The term corresponding to the integral over $y$ from $\B_1$ is at most
\[\sum_{i=1}^\infty\lambda(\B_{1,i})\sum_{k=1}^i\left|\deg^{\graphon}_{\B_{2,k}}x-\deg^{\graphon}_{\B_{2,k}}x'\right|\;\mbox{.}\]
We next observe that
\[\left|\prod_{k=1}^{i}\deg^{\graphon}_{\B_{2,k}}x-\prod_{k=1}^{i}\deg^{\graphon}_{\B_{2,k}}x'\right|\le
  \sum_{k=1}^i\left|\deg^{\graphon}_{\B_{2,k}}x-\deg^{\graphon}_{\B_{2,k}}x'\right|\mbox{ and}\]
\[\left|\prod_{k=1}^{i}(1-\deg^{\graphon}_{\B_{2,k}}x)-\prod_{k=1}^{i}(1-\deg^{\graphon}_{\B_{2,k}}x')\right|\le
  \sum_{k=1}^i\left|\deg^{\graphon}_{\B_{2,k}}x-\deg^{\graphon}_{\B_{2,k}}x'\right|\]
for every $i\in\NN$.  
Since it holds that $\lambda(\B_{j,i})=2^{-i}/27$ for $j\in\{1,2,4,5\}$, 
we obtain that the sum of the terms corresponding to the integrals over $y$ from $\B_1$, $\B_2$, $\B_4$ and $\B_5$
is at most
\[\frac{1}{27}\left(\sum_{i=1}^\infty 2^{-i}\left|\deg^{\graphon}_{\B_{2,i}}x-\deg^{\graphon}_{\B_{2,i}}x'\right|
   + 3\sum_{i=1}^\infty 2^{-i}\sum_{k=1}^{i}\left|\deg^{\graphon}_{\B_{2,k}}x-\deg^{\graphon}_{\B_{2,k}}x'\right|\right)\;\mbox{,}\]
which is equal to
\[\frac{7}{27}\left(\sum_{i=1}^\infty 2^{-i}\left|\deg^{\graphon}_{\B_{2,i}}x-\deg^{\graphon}_{\B_{2,i}}x'\right|\right)\;\mbox{.}\]
Since the term corresponding to the integral over $y$ from $\E_2$
is at most the sum of the terms to the integrals over $y$ from $\B_1$, $\B_2$, $\B_4$ and $\B_5$,
we conclude that
\[||f^{\graphon}_x-f^{\graphon}_{x'}||_1\leq\frac{14}{27}\left(\sum_{i=1}^\infty 2^{-i}\left|\deg^{\graphon}_{\B_{2,i}}x-\deg^{\graphon}_{\B_{2,i}}x'\right|\right)\;\mbox{.}\]
It follows that $h^{-1}$ is a continuous map from $[0,1]^\infty$ to $\D$.
Since $h^{-1}$ is a continuous injective map from a compact space to a Haussdorf space,
it follows that $h$ is a homeomorphism between $\D$ with the topology given by $T(\graphon)$ and $[0,1]^\infty$.
Since the identity map from $T(\graphon)$ to $\overline{T}(\graphon)$ is injective and continuous~\cite{bib-lovasz10++},
it also follows that $h$ is a homeomorphism between $\D$ with the topology given by $\overline{T}(\graphon)$ and $[0,1]^\infty$.
\end{proof}

\section{Constraints}
\label{sec:list}

This section and the next section are devoted to the proof of the following theorem,
which together with Proposition~\ref{prop:infinite} implies Theorem~\ref{thm:main}.

\begin{theorem}\label{thm:forcible}
The \name graphon $\graphon$ is finitely forcible.
\end{theorem}

In this section,
we present the set $\constr$ of the constraints that such that
the graphon $\graphon$ is the unique graphon satisfying $\constr$.
We only list the constraints contained in $\constr$ and
their analysis is postponed to the next section.

We present the constraints contained in the set $\constr$ split into groups
depending on the properties of a graphon that they force, and
we informally describe these properties.

\begin{description}
\item{\em Partition constraints} are the constraints given in the Lemma~\ref{lemma:partitioned},
     which are satisfied by partitioned graphons with the same number of parts as $\graphon$ and
     with the measures and the degrees of vertices of the parts as in $\graphon$.
\end{description}

All the constraints that are presented in the rest are decorated constraints
with vertices labelled by the parts $A_0,\ldots, A_3, B_1,\ldots, B_5, C,D,E_1, E_2,F$.
 
\begin{description}
\item{{\em The zero constraints}} force that $W$ equals $0$ almost everywhere on
\begin{itemize}
\item $A_0\times (A_0\cup A_2\cup A_3\cup B_1\cup B_2\cup B_3\cup B_4\cup B_5\cup D\cup E_2\cup F)$,
\item $A_1\times (D\cup E_2)$,
\item $A_2 \times (A_2\cup B_1\cup B_3\cup B_4\cup B_5\cup D\cup E_2)$,
\item $A_3 \times (A_3\cup B_1\cup\cdots\cup B_5\cup D\cup E_2)$,
\item $B_2 \times (B_2\cup \cdots\cup B_5\cup E_2)$,,
\item $B_3 \times (B_3\cup B_4\cup B_5\cup D\cup E_2)$,
\item $B_4 \times (B_4\cup B_5\cup E_2)$,
\item $B_5 \times (B_5\cup E_2)$,
\item $C\times (D\cup E_2)$,
\item $D \times (D\cup E_1)$,
\item $E_1\times (E_1\cup E_2\cup F)$,
\item $E_2 \times (E_2\cup F)$, and
\item $F \times F$.
\end{itemize}
The constraint forcing the zero edge density between parts $X$ and $Y$ is depicted in Figure~\ref{fig:zero}.

\begin{figure}
\centering
\includegraphics{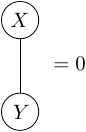}
\caption{Constraint forcing zero edge density.}
\label{fig:zero}
\end{figure}

\item{{\em The degree unifying constraints}}
force that the relative degree
of almost every vertex $x$ from a part $A_i$, $i=0,\ldots,3$, a part $B_j$, $j=1,\ldots,5$ and the part $C$
with respect to the complement of $E_2\cup F$, i.e. $A_0\cup\cdots\cup A_3\cup B_1\cup\cdots\cup B_5\cup C\cup D\cup E_1$,
is equal to $1/2$, and
that $W(x,z)$ is constant for almost every such $x$ when $z$ ranges through the part $E_1$.
These constraints also force that the degree of almost every vertex $y$ from the part $D$ is $4/27$ and
$W(y,z)$ is constant for almost every such $y$ when $z$ ranges through the part $E_2$.
The constraints are depicted in Figures~\ref{fig:XxE} and~\ref{fig:XxE2}.

\begin{figure}
\centering
\includegraphics{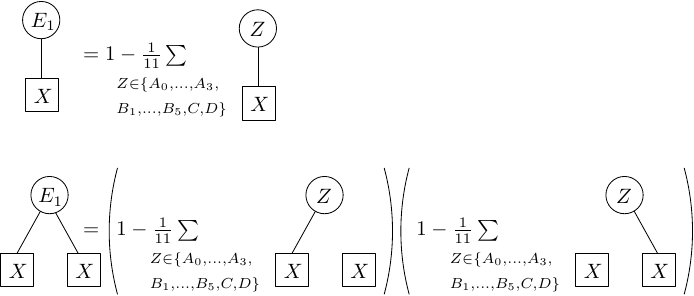}
\caption{The degree unifying constraints contain the depicted constraints for all the choices of $X$ in $\{ A_0, \ldots, A_3, B_1,\ldots, B_5, C\}$.}
\label{fig:XxE}
\end{figure}

\begin{figure}
\centering
\includegraphics{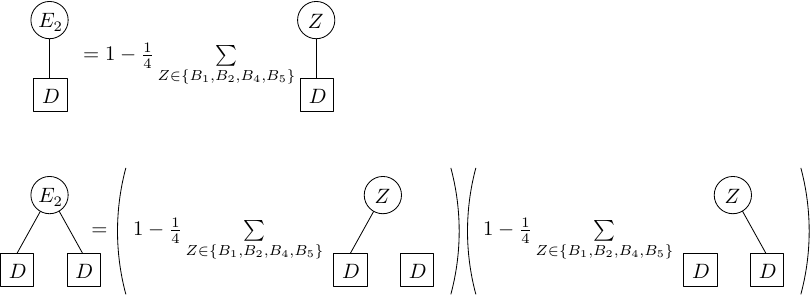}
\caption{The degree unifying constraints for $D$.}
\label{fig:XxE2}
\end{figure}

\item{{\em The degree distinguishing constraints}} force that
the graphon is constant between the part $F$ and each of the parts $A_0,\ldots,A_3$, $B_1,\ldots,B_5$, $C$ and $D$, and
that this constant is equal to the value given in Table~\ref{tab:FxX}.
The existence of finitely many such constraints follows from Lemma~\ref{lemma:pseudorandom};
Figure~\ref{fig:distinguish} contains an example of two constraints that
can be used to force the graphon to be equal to $9/10$ between the parts $C$ and $F$.
 
\begin{table}
\begin{center}
	\begin{tabular}{| l |>{$}c<{$}|>{$}c<{$}|>{$}c<{$}|>{$}c<{$}|>{$}c<{$}|>{$}c<{$}|>{$}c<{$}|>{$}c<{$}|>{$}c<{$}|>{$}c<{$}|>{$}c<{$}|}
    \hline
    Part & A_0 & A_1 & A_2 & A_3 & B_1 & B_2 & B_3 & B_4 & B_5 & C \\ \hline 
    Density & 0 & \frac{1}{10} & \frac{2}{10} & \frac{3}{10} & \frac{4}{10} & \frac{5}{10} & \frac{6}{10} & \frac{7}{10} & \frac{8}{10} & \frac{9}{10} \\ 
    \hline
    \end{tabular}
    \caption{Densities between the part $F$ and the other parts.}
    \label{tab:FxX}
\end{center}
\end{table}

\begin{figure}
\centering
\includegraphics{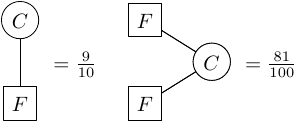}
\caption{The degree distinguishing constraints for $F\times C$.}
\label{fig:distinguish}
\end{figure}

\item{{\em The triangular constraints}} 
force that the structure of the subgraphon induced by $C\times X$ is
the same in $\graphon$ for every $X\in A_0, \ldots, A_3, B_1, \ldots, B_5, C$,
i.e., that the subgraphon induced by $C\times X$ is the half-graphon.
Let $H_i$ and $d_i$ be the finitely many graphs and their densities that
are satisfied by the half-graphon only; such a finite set of graphs exists
since the half-graphon is finitely forcible~\cite{DiHoJa09,bib-lovasz11+}.
The structure of the subgraphon induced by $C\times C$ is forced by the constraints $H'_i=d_i$
where $H'_i$ is the decorated graph obtained from $H_i$ by labeling each vertex with $C$, and
the structure of the subgraphon induced by $C\times X$ for $X\not=C$
is forced by the constraints depicted in Figure~\ref{fig:triangular}.

\begin{figure}
\centering
\includegraphics{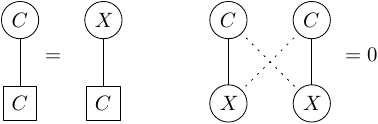}
\caption{The triangular constraints include the depicted constraints for all the choices of $X$ in $\{A_0, \ldots, A_3, B_1,\ldots, B_5\}$.}
\label{fig:triangular}
\end{figure} 

\item{{\em The main \checkered\ constraints}} 
force the \checkered\ structure of the subgraphon induced by $A_1\times A_1$. They are depicted in Figure~\ref{fig:checkered}.

\begin{figure}
\centering
\includegraphics{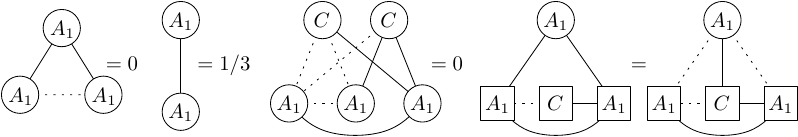}
\caption{The main \checkered\ constraints.}
\label{fig:checkered}
\end{figure}

\item{\em The complete bipartition constraints}
 force, in particular, that the subgraphons induced by $A_1\times A_2$, $A_1\times A_3$, $A_1\times B_1$, \ldots, $A_1\times B_5$, $A_2\times A_3$ and $A_2\times B_2$
 are unions of complete bipartite subgraphons. The constraints are given in Figure~\ref{fig:bipartite}.
 
\begin{figure}

\centering
\includegraphics{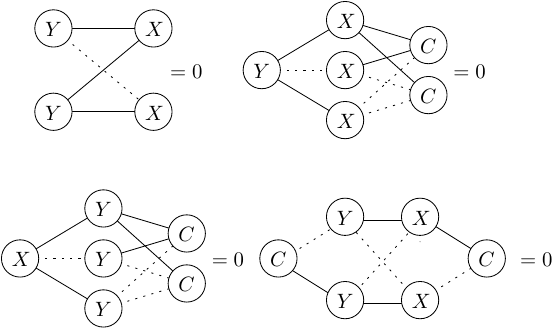}
\caption{The complete bipartition constraints consist of the top two constraint for $(X,Y)\in \{(A_1, A_2), (A_1, A_3), (A_1, B_1), \ldots, (A_1,B_5), (A_2, A_3), (A_2,B_2)\}$ and the bottom two constraints for $(X,Y)\in \{(A_1, A_2)$, $(A_1, A_3),$ $(A_1, B_2), \ldots,$ $(A_1,B_5),$ $(A_2, A_3), (A_2,B_2)\}$.}
\label{fig:bipartite}
\end{figure}

\item{{\em The auxiliary \checkered\ constraints}} 
determine the sizes of the sides of complete bipartite subgraphons in $A_1\times A_2$, $A_1\times A_3$, $A_1\times B_1, \ldots, A_1\times B_5$, $A_2\times A_3$ and $A_2\times B_2$. They are depicted in Figure~\ref{fig:aux-check}.

\begin{figure}

\centering
\includegraphics{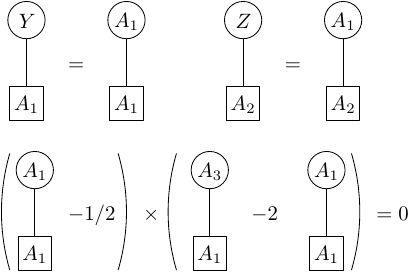}
\caption{The auxiliary \checkered\ constraints consist of the depicted constraints, where $Y$ in the first constrains attains all values in $\{A_2, B_1, \ldots, B_5\}$ and $Z$ in the second constraint attains all values in $\{A_3, B_2\}$.}
\label{fig:aux-check}
\end{figure}

\item{{\em The first level constraints}}
 force the structure of subgraphon induced by $A_0\times A_1$ and they are depicted in Figure~\ref{fig:forceA0}.
 
\begin{figure}
\centering
\includegraphics{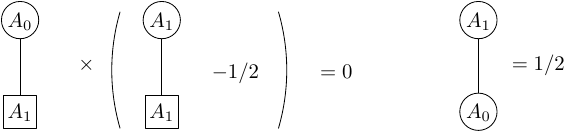}
\caption{The first level constraints.}
\label{fig:forceA0}
\end{figure}

\item{{\em The stair constraints}}
 force the structure of subgraphon induced by $B_1\times B_3$. They are depicted in Figure~\ref{fig:stairs}.
 
\begin{figure}
\centering
\includegraphics{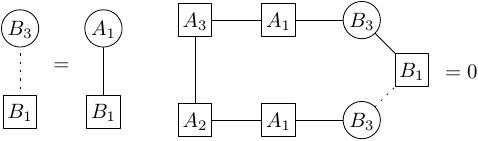}
\caption{The stair constraints.}
\label{fig:stairs}
\end{figure}

\item{{\em The coordinate constraints}}
 force some properties of the structure of the subgraphons induced by $B_1\times (B_2\cup B_4\cup B_5)$ and $D\times (B_2\cup B_4\cup B_5)$. They can be found in Figure~\ref{fig:coord}.
 
\begin{figure}

\centering
\includegraphics{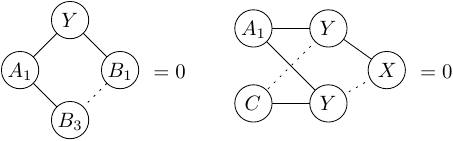}
\caption{The coordinate constraints consist of the depicted constraints, where $X$ and $Y$ attain all values in $\{B_1, D\}$ and $\{B_2, B_4, B_5\}$, respectively.}
\label{fig:coord}
\end{figure}

\item{{\em The initial coordinate constraint}}
 determines the relative degrees of vertices of $B_1$ in a subset of $B_2$. It is depicted in Figure~\ref{fig:first}.

\begin{figure}
\centering
\includegraphics{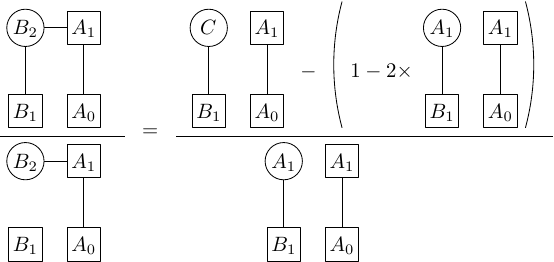}
\caption{The initial coordinate constraint.}
\label{fig:first}
\end{figure}

\item{{\em The distribution constraints}}
 determine the relative degrees of vertices of $B_2$ in $B_1$ and $D$ are depicted in Figure~\ref{fig:distr}.
 
\begin{figure}
\centering
\includegraphics{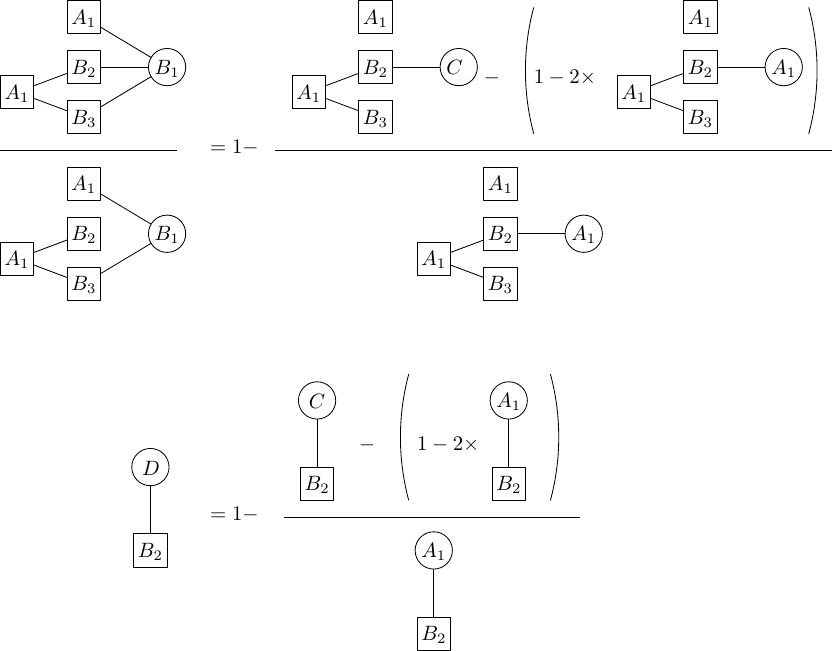}
\caption{The distribution constraints.}
\label{fig:distr}
\end{figure}

\item{\em The product constraints}
 force the structure of the subgraphons induced by $B_1\times B_4$, $D\times B_4$, $B_1\times B_5$ and $D\times B_5$. They are depicted in Figures~\ref{fig:B1xB4}, \ref{fig:B1xB5}.
 
\begin{figure}

\centering
\includegraphics{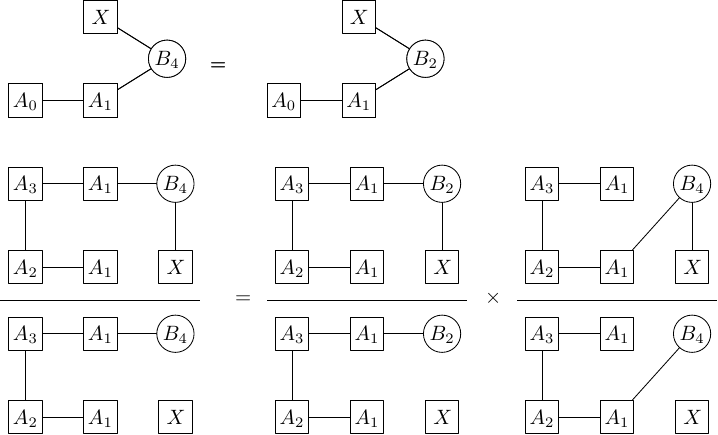}
\caption{The product constraints forcing $B_1\times B_4$ and $D\times B_4$  consist of the depicted constraints, where $X\in\{B_1,D\}$.}
\label{fig:B1xB4}
\end{figure}
 
\begin{figure}

\centering
\includegraphics{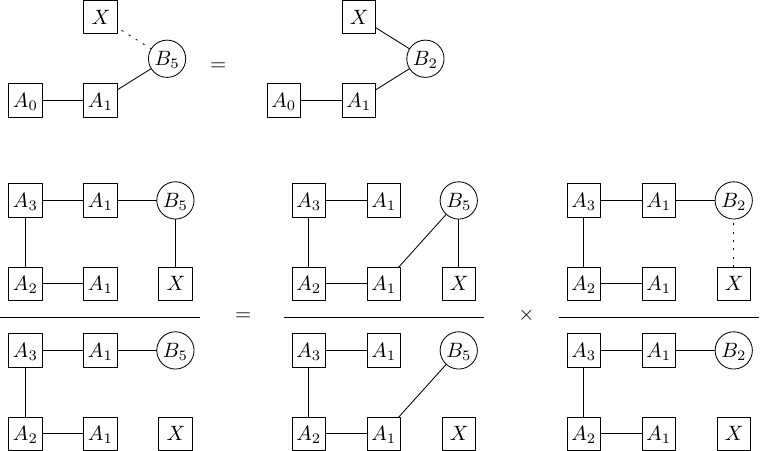}
\caption{The product constraints forcing $B_1\times B_5$ and $D\times B_5$ consist of the depicted constraints, where $X\in\{B_1,D\}$.}
\label{fig:B1xB5}
\end{figure}

\item{\em The projection constraints}
 force the structure of the subgraphon induced by $B_1\times B_1$. They are depicted in Figures~\ref{fig:projection} and~\ref{fig:proj2}.

\begin{figure}
\centering
\includegraphics{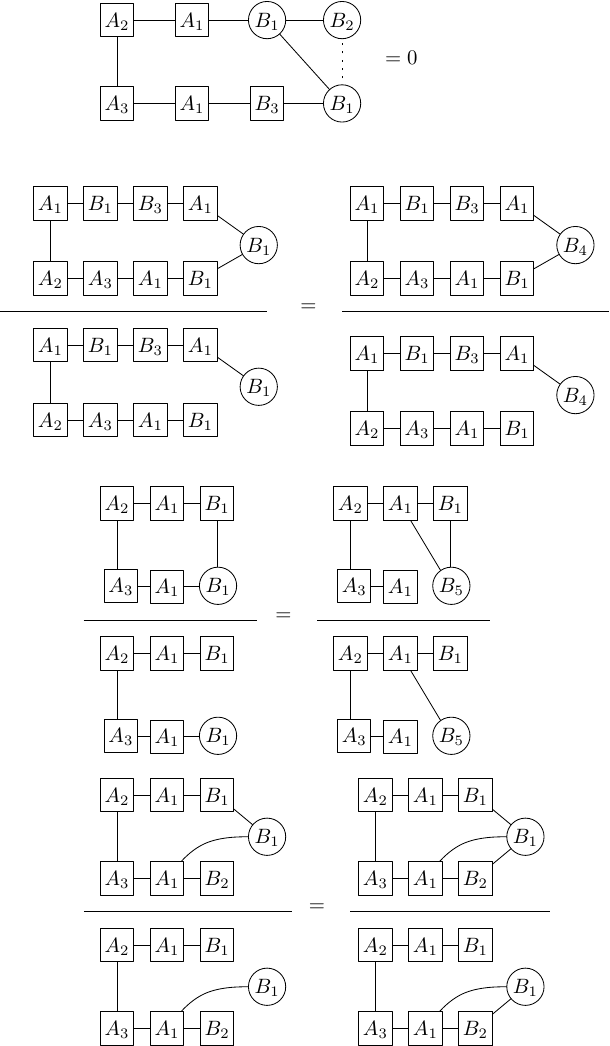}
\caption{The first four projection constraints.}
\label{fig:projection}
\end{figure}

\begin{figure}
\centering
\includegraphics{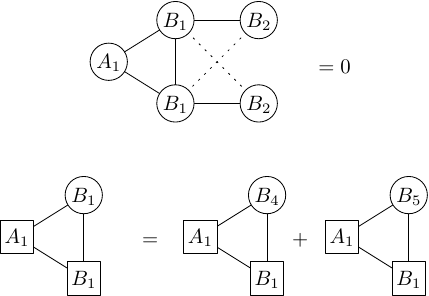}
\caption{The last two projection constraints.}
\label{fig:proj2}
\end{figure}

\item{{\em The infinite constraints}
} force the structure of the subgraphon iduces by $D\times B_1$ and $D\times B_2$. They are depicted in Figure~\ref{fig:infinite}. 
\end{description}

\begin{figure}
\centering
\includegraphics{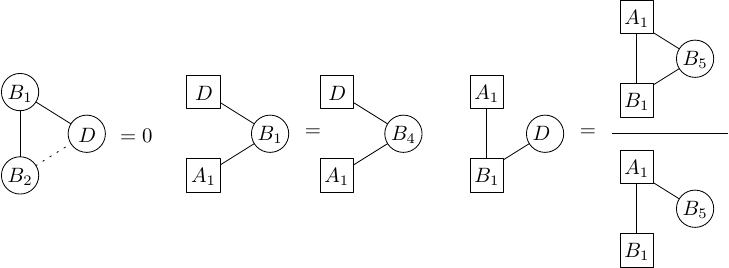}
\caption{The infinite constraints.}
\label{fig:infinite}
\end{figure}

This completes the list of the constraints that are contained in $\constr$

\section{Proof of Theorem \ref{thm:forcible}}
\label{sec:proof}

In this section, we prove Theorem~\ref{thm:forcible}.
In particular, we will show that the \name graphon $\graphon$ is the unique (up to weak isomorphism) graphon satisfying set $\constr$ of the constraints that
we listed in Section~\ref{sec:list}.

Fix a bijective \recipe\ $\ccube=\{\cube_{n}|n\in \nati\}$, which determines the graphon $\graphon$.
Suppose that $W$ is a graphon that satisfies all constraints contained in $\constr$.
Our aim is to show that the graphons $W$ and $\graphon$ are weakly isomorphic.
Since $W$ satisfies the partition constraints,
$W$ is a partitioned graphon with parts of the same measure as those of $\graphon$ and
the vertices in the corresponding parts having the same degree as those in $\graphon$.
The parts of $\WW$ are denoted by $A_0,\ldots, A_3, B_1,\ldots, B_5, C, D, E_1, E_2, F$ in such a way that
the part $X$ corresponds to the part $\X$ of the graphon $\graphon$.
We will strictly use $A_0,\ldots, F$ in the context of the graphon $\WW$ and
$\A_0, \ldots, \F$ in the context of the graphon $\graphon$.
In the analogy to $\B_{1,n}$ and $\B_{2,n}$,
we define $B_{1,n}$ and $B_{2,n}$ to be the vertices of $B_1$ and $B_2$, respectively,
that have relative degree $2^{-n}$ with respect to $A_1$ in $\WW$.

By the Monotone Reordering Theorem (see \cite{bib-lovasz-book} for more details),
there exist measure preserving maps
$\psi_X:X\rightarrow \X$ for $X= A_0, \ldots, A_3$, $ B_2, \ldots, B_5$, $C$, $E_1$, $E_2$, $F$ and
non-decreasing functions $f_{X}:\X\rightarrow [0,1]$
such that $f_X(\psi_{X}(x))=\deg_{C}^{\WW} x$ for almost every $x\in X$. 
Note that we have not (yet) defined the functions $\psi_{B_1}$ and $\psi_D$.

We now define a map $g_n:B_{1,n}\to [0,1]^n$ as
$$g_n(x)=\left(\deg_{B_{2,i}}^{\WW}(x)\right)_{i\in [n]}$$
for $x\in B_{1,n}$ and $g_{\infty}:D\to [0,1]^\infty$ as
$$g_{\infty}(x)=\left(\deg_{B_{2,i}}^{\WW}(x)\right)_{i\in\NN}$$
for $x\in D$.
Note that $g_n$ is well-defined almost everywhere on $B_{1,n}$ and $g_{\infty}$ almost everywhere on $D$.
We next define a map $\psi_{B_1}:B_1\to\B_1$ as
$$\eta_{B_1}^{-1}\left(1-\frac{1}{2^{n-1}}+\frac{\cube_{n}^{-1}(g_n(x))}{2^{n}}\right).$$
for $x\in B_{1,n}$, and
we set $\psi_{B_1}(x)$ to be the same arbitrary vertex of $\B_1$ for $x$ that
does not belong to any $B_{1,n}$, $n\in\NN$.
Similarly,
we define
$$\psi_{D}(x)=\eta_{D}^{-1}(\cube_{\infty}^{-1}(g_{\infty}(x)))\;\mbox{.}$$
Let $\psi$ be the map from $[0,1]$ to $[0,1]$ equal to the map $\psi_X$ on the part $X$
for $X=A_0$, $\ldots$, $A_3$, $B_1$, $\ldots$, $B_5$, $C$, $D$, $E_1$, $E_2$, $F$.

In the rest of the section, we show that the graphons $\graphon^{\psi}$ and $\WW$ are equal almost everywhere and
the map $\psi$ is measure preserving. This would imply that the graphon $\graphon$ is weakly isomorphic to $\WW$.
Note at this point that the maps $\psi_X$ for $X\not=B_1,D$, which form the map $\psi$, are measure preserving;
so we only need to argue that $\psi_{B_1}$ and $\psi_{D}$ are measure preserving maps,
which we will show in Subsections~\ref{sec:proj} and \ref{sec:inf}.

\subsection{Zero and triangular tiles}
\label{subsec:triangle}

The zero constraints guarantee that
if $\graphon$ is equal to zero almost everywhere on $\X\times\Y$ for $X,Y\in\{A_0, \ldots, A_3, B_1, \ldots, B_5,C,D,E_1,E_2,F\}$,
then the graphon $\WW$ is equal to zero almost everywhere on $X\times Y$.
In particular, the graphons $\graphon^{\psi}$ and $\WW$ are equal almost everywhere on $X\times Y$.

The triangular constraints that correspond to those forcing the half-graphon guarantee that
the subgraphon of $\WW$ induced by $C\times C$ is weakly isomorphic to the half-graphon.
The choice of $\psi_C$ now implies that the graphons $\graphon^{\psi}$ and $\WW$ are equal almost everywhere on $C\times C$.
We next analyze the constraints depicted in Figure~\ref{fig:triangular}.
Fix $X\in\{A_0, \ldots, A_3, B_1, \ldots, B_5\}$.
The first constraint in Figure~\ref{fig:triangular} yields that $\deg^\WW_C(z)=\deg^\WW_X(z)$ for almost every $z\in C$.
The second constraint yields that $N_{C}(x\setminus y)$ or $N_{C}(y\setminus x)$ or both
has measure zero for almost every pair $x,y\in X$.
This implies that the graphon $\WW$ has values 0 and 1 almost everywhere on $X\times C$.
The choice of $\psi_X$ implies that $\WW$ and $\graphon^{\psi}$ are equal almost everywhere on $X\times C$
for $X\in \{A_0, \ldots, A_3, B_2, \ldots, B_5\}$. 
Note that we have not reached this conclusion for $X=B_1$ (because $\psi_{B_1}$ is chosen differently)
but we have still shown that the graphon $\WW$ is equal to $0$ or to $1$ almost everywhere on $B_1\times C$ and
that the measure of the set containing $b\in B_1$ such that $\deg_C b\le z$, $z\in [0,1]$, is equal to $z\lambda(B_1)$.

The subgraphon induced by $X\times C$ determines a preorder on the vertices of $X$ according to their relative degrees in $C$.
We often use this fact in our analysis.
In this context,
we write $x\prec_{X} y$ instead of $\deg_{C} x < \deg_{C} y$ for $x,y\in X$.
We also extend this notation to subsets and
write $Y\prec_{X} Z$ for subsets $Y, Z\subseteq X$ if $y\prec_{X}z$ for every $y\in Y$ and every $z\in Z$.

\subsection{Forcing the structure on $A_1\times A_1$}
\label{subsec:A1A1}

We now show that the main \checkered\ constraints,
which are depicted in Figure~\ref{fig:checkered}, force that $\WW$ and $\graphon^{\psi}$ agree almost everywhere on $A_1\times A_1$. 
Our line of arguments follows that in~\cite{bib-rademacher}; we sketch the arguments and refer the reader to~\cite{bib-rademacher} for a more detailed analysis.

The first constraint in Figure~\ref{fig:checkered} implies that
if $x$ is a typical vertex of $A_1$ with respect to $T(\WW)$,
then $\WW(x,y)$ is equal to $0$ or $1$ for almost every $y\in A_1$ and
if $x$ and $x'$ are two typical vertices of $A_1$, then
either $N_{A_1}(x)$ and $N_{A_1}(x')$ are equal up to a set of measure zero or they are disjoint up to a set of measure zero.
Moreover, the measure of the pairs $(x,x')$ such that $W(x,x')\not=1$ and
$N_{A_1}(x)$ and $N_{A_1}(x')$ are equal up to a set of measure zero is zero.
Let $\JJ_{A_1}$ be the set of disjoint non-null measurable subsets of $A_1$ such that
each $J\in\JJ_{A_1}$ is equal to $N_{A_1}(x)$ up to a set of measure zero for some typical vertex $x\in A_1$ and
each $N_{A_1}(x)$ differs from a set contained in $\JJ_{A_1}$ on a set of measure zero.
Our reasoning implies that, except for a subset of $A_1\times A_1$ of measure zero,
$W(x,y)=1$ for $(x,y)\in A_1\times A_1$ if and only if $x$ and $y$ belong to the same set $J\in\JJ_{A_1}$.
Informally speaking, the graphon $\WW$ on $A_1\times A_1$ is a union of disjoint cliques on $J\in\JJ_{A_1}$.
Observe that since the sets contained in $\JJ_{A_1}$ are non-null and disjoint, then $\JJ_{A_1}$ is countable.

The third constraint implies that for every set $J\in\JJ_{A_1}$,
there exists a set $J'\subseteq A_1$ differing from $J$ on a null set such that
$J'$ is an interval with respect to $\prec_{A_1}$,
i.e., if $x,x'\in J'$ and $x\prec_{A_1} x''\prec_{A_1} x'$, then $x''\in J'$.
Hence, we can assume without loss of generality that each $J\in\JJ_{A_1}$ is an interval with respect to $\prec_{A_1}$.
The fourth constraint forces that it holds for almost every two vertices $x\prec_{A_1} x'$ from the same $J\in\JJ_{A_1}$ that
\begin{eqnarray*}
\lambda(J)&=&\lambda\left(\{x''\in A_1\ |\ x\prec_{A_1} x''\mbox{ and }x''\not\in J\}\right)\\
          &=&\lambda\left(\{x''\in A_1\ |\ x''\not\in J\mbox{ and }\exists z\in J\;z\prec_{A_1} x''\}\right)\;\mbox{.}
\end{eqnarray*}	    
Since the second constraint implies that $\sum\limits_{J\in \JJ_{A_1}} \lambda(J)^2=\lambda(A_1)^2/3$,
we obtain (see details of the analysis in~\cite{bib-rademacher}) that
for every $J\in\JJ_{A_1}$, there exists $k\in\NN$ such that $J$ and the set
\[\{x\in A_1\ |\ \deg_C x\in [1-2^{-k-1},1-2^{-k}]\}\]
differ on a set of measure zero.
We conclude that $\WW$ agrees with $\graphon^{\psi}$ almost everywhere on $A_1\times A_1$.

\subsection{Forcing the structure of $A_0\times A_1$}

We now consider the first level constraints, which are depicted in Figure~\ref{fig:forceA0}.
The first constraint implies that $\deg_{A_0}y=0$ or $\deg_{A_1}y=1/2$ for almost every vertex in $y\in A_1$.
In particular, $\WW(x,y)=0$ for almost every $x\in A_0$ and $y\in A_1$ unless $\deg_{A_1}y=1/2$.
The second constraint forces that the density of $\WW$ on $A_0\times A_1$ is equal to $1/2$,
which implies that $\WW(x,y)=1$ for almost every $x\in A_0$ and $y\in A_1$ such that $\deg_{A_1}y=1/2$.
Therefore, $\WW$ is equal to $\graphon^{\psi}$ almost everywhere on $A_0\times A_1$.

\subsection{Forcing remaining \checkered\ subgraphons}

We now use the bipartition constraints, which are depicted in Figure~\ref{fig:bipartite}.
Fix $(X,Y)$ to be one of the pairs $(A_1,A_2)$, $(A_1,B_2)$, $(A_1,B_3)$, $(A_1,B_4)$, $(A_1,B_5)$, $(A_2,A_3)$ and $(A_2,B_2)$.
Note that the list misses the pair $(A_1,B_1)$, which is analyzed separately afterwards.

The first constraint in Figure~\ref{fig:bipartite} implies that
there exist a set $\JJ_X$ formed by disjoint non-null subsets of $X$,
a set $\JJ_Y$ formed by disjoint non-null subsets of $Y$, and
a bijection $f:\JJ_X\to\JJ_Y$ such that
except for a subset of $X\times Y$ of measure zero,
it holds that $W(x,y)=1$ for $(x,y)\in X\times Y$ iff there exist $J\in\JJ_X$ such that $x\in J$ and $y\in f(J)$, and
$W(x,y)=0$ elsewhere on $X\times Y$.
Informally speaking, the graphon $W$ on $X\times Y$ is a disjoint union of complete bipartite subgraphons between $J\in\JJ_X$ and $f(J)\in\JJ_Y$.
We remark here that the set $\JJ_{A_1}$ can in principle differ from the set defined in Subsection~\ref{subsec:A1A1},
however, we will later argue that they actually coincide (in the sense that the elements of the set differ from each other on a set of measure zero).

Analogously to Subsection~\ref{subsec:A1A1},
the second constraint depicted in Figure~\ref{fig:bipartite} implies that
each set contained in $\JJ_X$ differs from an interval with respect to $\preceq_X$ on a set of measure zero and
the third constraint implies that
each set contained in $\JJ_Y$ differs from an interval with respect to $\preceq_Y$ on a set of measure zero.
Hence, we can assume without loss of generality that
each set contained in $\JJ_X$ is an interval with respect to $\preceq_X$ and
each set contained in $\JJ_Y$ is an interval with respect to $\preceq_Y$.
Finally, the fourth constraint implies that the intervals are in the same order, i.e.,
if $J,J'\in\JJ_X$ satisfy that $J\preceq_X J'$, then $f(J)\preceq_Y f(J')$.

It remains to determine the measures of the sets contained in $\JJ_X$ and $\JJ_Y$.
Recall that we have shown that $\WW$ agrees with $\graphon^{\psi}$ almost everywhere on $A_1\times A_1$.
We now split the argument depending on whether $X=A_1$ or $X=A_2$ and start with analyzing the case $X=A_1$.
If $Y\not=A_3$, consider the first constraint depicted in Figure~\ref{fig:aux-check};
this constraint implies that almost all the vertices of $X=A_1$ have the same relative degree with respect to $A_1$ as with respect to $Y$.
Hence, almost every $x\in X$ belongs to some $J\in\JJ_X$ and
for every $J\in\JJ_X$, it holds that $\lambda(f(J))=\lambda(A_1)\deg_{A_1} x$ for almost every $x\in J$.
Since $f$ is a bijection and the sets contained in $\JJ_Y$ are disjoint,
it follows that $\JJ_X$ coincides with the set $\JJ_{A_1}$ defined in Subsection~\ref{subsec:A1A1} and
$\lambda(f(J))=\lambda(J)$ for every $J\in\JJ_X$.
If $Y=A_3$, the third constraint in Figure~\ref{fig:aux-check} yields that
almost all the vertices of $X=A_1$ have either the relative degree with respect to $A_1$ equal to $1/2$ or
the relative degree with respect to $Y$ double the relative degree with respect to $A_1$;
this again implies that $\JJ_X$ coincides with the set $\JJ_{A_1}$ defined in Subsection~\ref{subsec:A1A1} and
$\lambda(f(J))=2\lambda(J)$ for every $J\in\JJ_X$ unless $\lambda(J)=\lambda(X)/2$.
Since the elements of $\JJ_Y$ are disjoint intervals with respect to $\preceq_Y$ and the bijection $f$ preserves their order,
we conclude that for every $J\in\JJ_{Y}$, there exists $k\in\NN$ such that $J$ and the set
\[\{y\in Y\ |\ \deg_C y\in [1-2^{-k-1},1-2^{-k}]\}\]
differ on a set of measure zero (this holds both if $Y=A_3$ and if $Y\not=A_3$).
It follows that $\WW$ and $\graphon^{\psi}$ agree almost everywhere on $X\times Y$ if $X=A_1$,
i.e, $\WW$ and $\graphon^{\psi}$ agree almost everywhere on $A_1\times A_2$, $A_1\times A_3$ and $A_1\times B_2,\ldots,A_1\times B_5$.

We next finish the analysis of the case $X=A_2$; note that $Y$ is either $A_3$ or $B_2$ in this case.
The second constraint depicted in Figure~\ref{fig:aux-check} implies that
almost all the vertices of $X=A_2$ have the same relative degree with respect to $A_1$ as with respect to $Y$.
Hence, almost every $x\in X=A_2$ belongs to some $J\in\JJ_{A_2}$ and
$\lambda(f(J))=\lambda(A_1)\deg_{A_1} x$ for almost every $x\in J$.
Since $f$ is a bijection, we conclude (using the already analyzed structure on $A_2\times A_1$) that
$\lambda(f(J))=\lambda(J)$ for every $J\in\JJ_{A_2}$.
Hence, the set $\JJ_{A_2}$ coincides with the set $\JJ_{A_2}$ as defined in the case $(X,Y)=(A_1,A_2)$, and
for every $J\in\JJ_{Y}$, there exists $k\in\NN$ such that $J$ and the set
\[\{y\in Y\ |\ \deg_{A_1} y\in [1-2^{-k-1},1-2^{-k}]\}\]
differ on a set of measure zero.
It follows that $\WW$ and $\graphon^{\psi}$ agree almost everywhere on $X\times Y$ if $(X,Y)$ is $(A_2,A_3)$ or $(A_2,B_2)$.

In the previous analysis, we have omitted the case $(X,Y)=(A_1,B_1)$.
As in the general case $X=A_1$ considered above,
we derive that the top two constraints in Figure~\ref{fig:bipartite} imply that
there exist a set $\JJ_X$ formed by disjoint non-null subsets of $X$ that are intervals with respect to $\preceq_X$,
a set $\JJ_Y$ formed by disjoint non-null subsets of $Y$, and
a bijection $f:\JJ_X\to\JJ_Y$ such that
except for a subset of $X\times Y$ of measure zero,
it holds that $W(x,y)=1$ for $(x,y)\in X\times Y$ iff there exist $J\in\JJ_X$ such that $x\in J$ and $y\in f(J)$.
Note that we do not make any claims about the structure of the sets contained in $\JJ_Y$.
The first constraint in Figure~\ref{fig:aux-check} implies that $\lambda(f(J))=\lambda(J)$ for every $J\in\JJ_X$.
It follows that $\JJ_X$ coincides with $\JJ_{A_1}$ defined in Subsection~\ref{subsec:A1A1},
almost every vertex $y\in Y$ belongs to $J\in\JJ_Y$ and
the measure of $y\in Y$ with $\deg_{A_1} y=2^{-k}$ is equal to $2^{-k}$.
In particular, the measure of $B_{1,k}$ is $2^{-k}$.
It follows that $\WW$ and $\graphon^{\psi}$ agree almost everywhere on $A_1\times B_1$.

Because each of the sets $\JJ_X$, $X\in\{A_1,\ldots,A_3,B_1,\ldots,B_5\}$, is the same
in all the definitions (in the sense that its elements differ from each other on a set of measure zero) that
we have given in this subsection and Subsection~\ref{subsec:A1A1}, we can just use $\JJ_X$ without referring to the particular place where the set was defined.
We now split each part $X\in\{A_1,\ldots,A_3,B_1,\ldots,B_5\}$ into {\em levels} in the way analogous to that the parts of $\graphon$ are split.
For $k\in\NN$,
the $k$-th level $A_{i,k}$, $i\in\{1,2\}$, of $A_i$ is formed by $x\in A_i$ such that $\deg_{A_1}x=2^{-k}$,
the $k$-th level level $A_{3,k}$ of $A_3$ is formed by $x\in A_3$ such that $\deg_{A_2}x=2^{-k}$, and
the $k$-th level $B_{i,k}$, $i\in\{1,\ldots,5\}$, of $B_i$ is formed by $x\in B_j$ such that $\deg_{A_1}x=2^{-k}$.
The levels of $X$ coincide with sets contained $\JJ_X$ (up to a difference on a set of measure zero).
Note that the measure of the level $A_{i,k}$ or $B_{i,k}$ is $2^{-k}$.
Also note that this coincides with our previous definition of $B_{1,k}$.

\subsection{Using levels in density expressions}

\begin{figure}
\centering
\includegraphics{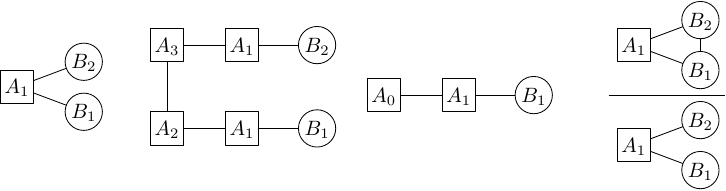}
\caption{Density expressions specifying levels of vertices.}
\label{fig:expr}
\end{figure}

Many of the density expressions used in the following subsections use the level structure of the parts $A_1$ , $A_2$, $A_3$, $B_1 ,\ldots, B_5$ of $\WW$
in combination with the structure of $\WW$ that we have already analyzed.
Some examples of decorated graphs that we use are given in Figure~\ref{fig:expr}.
In the first decorated graph, all three vertices must belong to the same level (ignoring events with probability zero),
i.e., if the root belongs to the $k$-th level of $A_1$, then the expression is equal (with respect to $\WW$) to $2^{-2k}$,
which is the product of the probabilities that a random vertex of $B_1$ belongs to $B_{1,k}$ and that a random vertex of $B_2$ belongs to $B_{2,k}$.

In the second decorated graph,
if the root decorated with $A_2$ belongs to the $k$-th level of $A_2$, which is $A_{2,k}$,
then its neighbors must belong to $A_{1,k}$ and $A_{3,k}$ and the remaining root to $A_{1,k+1}$.
In such case, the expression is equal to $2^{-2k-1}$,
which is the product of the probabilities that a random vertex of $B_1$ belongs to $B_{1,k}$ and that a random vertex of $B_2$ belongs to $B_{2,k+1}$.
In the third decorated graph,
the root decorated with $A_1$ must belong to $A_{1,1}$ and
the expression is equal to $1/2$,
which is the probability that a random vertex of $B_1$ belongs to $B_{1,1}$.

The final expression is more complex. Suppose that the root belongs to $A_{1,k}$.
The denominator is equal to $2^{-2k}$ as we have discussed earlier.
The numerator is equal to $2^{-2k}$ multiplied by the density between $B_{1,k}$ and $B_{2,k}$,
i.e., the whole expression is equal to the density of $\WW$ between the $B_{1,k}$ and $B_{2,k}$.

\subsection{Stair constraints} 

We now focus on the stair constraints, which are depicted in Figure~\ref{fig:stairs}.
They are intended to force the desired structure on $B_1\times B_3$.
The first constraint in Figure~\ref{fig:stairs} determines the relative degrees of vertices of $B_1$ in $B_3$,
i.e., it enforces that $\deg_{B_3}x=1-2^{-k}$ for almost every $x\in B_{1,k}$.
The second constraint forces that the following holds for almost every vertex $x\in B_1$ and every $k\in\NN$:
if $\deg_{B_{3,k+1}}x>0$, then $\deg_{B_{3,k}}x=1$ (if $\deg_{B_{3,k}}x=0$,
then there exists a choice of the roots decorated with $A_1$, $A_2$, $A_3$ and $A_1$ such that
the density expression is non-zero with $x$ being the root decorated with $B_1$).
Consequently, for almost every $x\in B_1$, there exists $k_0\in\NN$ such that
$\WW(x,y)=1$ for almost every $y\in B_{3,k}$, $k<k_0$ and $\WW(x,y)=0$ for almost every $y\in B_{3,k}$, $k>k_0$.
However, it is possible that $\deg_{B_3}x=1-2^{-k}$ for almost every $x\in B_{1,k}$ only if
it holds that for almost every $x\in B_1$, $k_0$ is equal to the level of $x$ and $\WW(x,y)=1$ for almost every $y\in B_{3,k_0}$.
It follows that $\WW$ agrees with $\graphon^{\psi}$ almost everywhere on $B_1\times B_3$.

\subsection{Coordinate constraints}
\label{sec:coord}

The coordinate constraints from Figure~\ref{fig:coord} force basic structure
between the parts $B_1$ and $D$ on one side and the parts $B_2$, $B_4$ and $B_5$ on the other side.
Fix $Y$ to be one of the parts $B_2$, $B_4$ and $B_5$.
The first constraint depicted in Figure~\ref{fig:coord} implies that
almost every vertex $b$ of $B_1$ with non-zero relative degree with respect to $Y_k$
has relative degree one with respect to $B_{3,k}$.
Hence, almost every $b\in B_{1,k}$ satisfies that $W(b,y)>0$ only if $y$ belongs to $Y_{k'}$ with $k'\le k$ except for a set of measure zero;
in particular, $W(b,y)=0$ for almost every $b\in B_{1,k}$ and $y\in Y_{k'}$, $k'>k$.

In addition to $Y$, fix $X$ to be either $B_1$ or $D$.
The second constraint implies that $N_{X}(y\setminus y')$ has measure zero for every $k$ and almost every two $y,y'\in Y_k$ such that $y\prec_{Y} y'$; consequently,
$\WW$ is equal to $0$ or $1$ almost everywhere on $X\times Y$.
It follows that for almost every $x\in X$ and every $k$,
there exists $y_0\in Y_k$ such that $\WW(x,y)=1$ for almost every $y\in Y_k$ with $y\preceq_{Y} y_0$ and
$\WW(x,y)=0$ for almost every $y\in Y_k$ with $y_0\preceq_{Y} y$.
In particular, the definition of $\psi$ on $B_1$ and $D$ now yields that
$\WW=\graphon^{\psi}$ almost everywhere on $B_1\times B_2$ and $D\times B_2$. 

\subsection{Initial coordinate constraint}

We now consider the initial coordinate constraint, which can be found in Figure~\ref{fig:first}.
The decorated graphs appearing in the constraint are evaluated to the following quantities when $b$ is the root decorated with $B_1$:
$$\deg_{B_{2,1}}b=\frac{\deg_{C}b-(1-2\deg_{A_1}b)}{\deg_{A_1}b}\;\mbox{.}$$
Consider now $b\in B_{1,k}$. Unless $b$ belongs to an exceptional set of measure zero,
the right hand side belongs to the interval $[0,1]$ only if $\deg_{C}b$ belongs to the interval $[1-2^{-k+1},1-2^{-k}]$.
This implies that $\WW$ agrees with $\graphon^{\psi}$ almost everywhere on $B_1\times C$.

The results of Subsection~\ref{subsec:triangle} imply that the measure of $b\in B_1$ with $\deg_{C}b\le z$ for $z\in [0,1]$ is equal to $z\lambda(B_1)$.
Hence, it follows for every $z\in [0,1]$ and every $k\in\NN$ that
\begin{equation}
\lambda\left(\{b\in B_{1,k}\ |\ (g_k(b))_1\le z\}\right)=z\cdot\lambda\left(B_{1,k}\right)\;\mbox{.}
\label{eq2.1}
\end{equation}

\subsection{Distribution constraints}
\label{sec:dist}

The equality \eqref{eq2.1} can be interpreted as saying that the first coordinate of each $g_k$ is uniformly distributed.
We now argue that the same holds for the remaining coordinates of $g_k$, $k\in\NN$, and all the coordinates of $g_{\infty}$.

The decorated graphs appearing in the first constraint in Figure~\ref{fig:distr} are evaluated to the following quantities
for every $k\in\NN$ and almost every $b\in B_{2,k'}$, $k'\le k$:
$$\deg_{B_{1,k}}b = 1-\frac{\deg_{C}b-(1-2\deg_{A_{1}}b)}{\deg_{A_{1}}b}\;\mbox{.}$$
Almost every $b\in B_{2,k'}$ satisfies that $\deg_{A_{1}}b=2^{-k'}$; so, we get that
$$\deg_{B_{1,k}}b = 1-2^{k'}\left(\deg_{C}b-\left(1-2^{-(k'-1)}\right)\right)\;\mbox{.}$$
Informally speaking,
the relative degree of almost every $b\in B_{2,k'}$ decreases linearly from $1$ to $0$ with its position within $B_{2,k'}$ given by $\prec_{B_2}$.
This and the analysis of the structure between the parts $B_1$ and $B_2$ in Subsection~\ref{sec:coord} imply that
\begin{equation}
\lambda\left(\{b\in B_{1,k}\ |\ (g_k(b))_{k'}\le z\}\right)=z\cdot\lambda\left(B_{1,k}\right)
\label{eq2.2}
\end{equation}
for every $k\in\NN$, every $k'\in [k]$ and every $z\in [0,1]$.

The second constraint depicted in Figure~\ref{fig:distr} implies the analogous statement for the structure between $D$ and $B_2$.
In particular, it holds that
\begin{equation}
\lambda\left(\{d\in D\ |\ (g_{\infty}(d))_{k'}\le z\}\right)=z\cdot\lambda(D)
\label{eqinf}
\end{equation}
for every $k'\in\NN$ and $z\in [0,1]$.

\subsection{Product constraints}
\label{sec:prod}

We now analyze the product constraints, which are depicted in Figures~\ref{fig:B1xB4} and~\ref{fig:B1xB5}.
Fix $(X,Y)$ to be one of the pairs $(B_1,B_4)$, $(B_1,B_5)$, $(D,B_4)$ and $(D,B_5)$.
The results on the structure of the graphon $\WW$ between $X$ and $Y$ from Subsection~\ref{sec:coord} imply that
if we show that $\deg_{Y_{k'}}x=\deg_{\Y_{k'}}\psi(x)$ for every $x\in X$ and $k'\in\NN$,
then $\WW$ and $\graphon^{\psi}$ agree almost everywhere on $X\times Y$.

Suppose that $(X,Y)=(B_1,B_4)$.
The first constraint depicted in Figure~\ref{fig:B1xB4} implies that $\deg_{B_{4,1}} b= \deg_{B_{2,1}} b$,
i.e., $\deg_{B_{4,1}} b=(g_k(b))_{1}$, for almost every $b\in B_{1,k}$.
The second constraint yields that $\deg_{B_{4,i}} b=\deg_{B_{2,i}}b \cdot \deg_{B_{4,i-1}} b$ for almost every $b\in B_{1,k}$;
if $i>k$, then $\deg_{B_{2,i}}b$ is equal to zero and so is $\deg_{B_{4,i}} b$ for such $b$.
If $i\le k$, we obtain that it holds for almost every $b\in B_{1,k}$ that
$$\deg_{B_{4,i}} b=(g_k(b))_i\cdot\deg_{B_{4,i-1}} b=\prod_{i'=1}^i(g_k(b))_{i'}\;\mbox{.}$$
Hence, the graphons $\WW$ and $\graphon^{\psi}$ are equal almost everywhere on $X\times Y=B_1\times B_4$.
The remaining three choices of $X\times Y$ are analyzed in the completely analogous way.

\subsection{Projection constraints}
\label{sec:proj}

This subsection forms the core of our argument.
We show that the mapping $\psi_{B_1}$ is measure preserving;
this will be implied by proving the following identity for every $k\in\NN$.
\begin{equation}
\lambda\left(\{b\in B_{1,k}\ |\ (g_k(b))_i\leq z_i\ \forall i\in [k]\}\right)=\lambda(B_{1,k}) \prod_{i\in [k]}z_i\;\mbox{ for all }z\in [0,1]^{k}. 
\label{eq3}
\end{equation}
Note that if \eqref{eq3} holds, then $g_k(B_{1,k}\setminus Z)$,
i.e., the image of $g_k$ in $[0,1]^k$ after removing $Z$ from the domain, is dense for every $Z\subseteq B_{1,k}$ of measure zero.

We prove \eqref{eq3} by induction on $k\in\NN$. Note that \eqref{eq3} holds for $k=1$ by \eqref{eq2.1}.
As a part of the induction argument,
we will also show that $\WW$ and $\graphon^{\psi}$ are equal almost everywhere on $B_{1,k'}\times B_{1,k}$ if $k'<k$.

Fix integers $k'$ and $k$ such that $k'<k$ and assume using the induction that (\ref{eq3}) holds for all smaller values of $k$.
The first constraint depicted in Figure~\ref{fig:projection} yields that
$N_{B_2}(b'\setminus b)$ has measure zero, i.e., $\deg_{B_{2,i}}b'\leq \deg_{B_{2,i}}b$,
for almost every pair of vertices $b'\in B_{1,k'}$ and $b\in B_{1,k}$ such that $W(b',b)>0$.
In other words, the set
$$N_{B_{1,k'}}(b) \setminus \{b'\in B_{1,k'}\ |\ (g_{k'}(b'))_i\leq (g_{k}(b))_i\ \forall i\in[k']\}$$
has measure zero for almost every $b\in B_{1,k}$ and the set
$$N_{B_{1,k}}(b') \setminus \{b\in B_{1,k}\ |\ (g_{k}(b))_i\geq (g_{k'}(b'))_i\ \forall i\in[k']\}$$
has measure zero for almost every $b'\in B_{1,k'}$.

The second constraint in Figure~\ref{fig:projection} forces that
$$\deg_{B_{1,k'}}b=\deg_{B_{4,k'}}b$$
for almost every $b\in B_{1,k}$.
Recall that we have shown in Subsection~\ref{sec:prod} that 
\begin{equation}\label{eqx}
\deg_{B_{4,k'}}b=\prod_{i\in[k']}(g_{k}(b))_i
\end{equation}
for almost every $b\in B_{1,k}$.
Since the equality (\ref{eq3}) holds for $k'$, it follows that the set $N_{B_{1,k'}}(b)$ and the set 
\[\{b'\in B_{1,k'}\ |\ (g_{k'}(b'))_i\leq (g_{k}(b))_i\ \forall i\in [k']\}\]
differ on a set of measure zero and
therefore the graphon $\WW$ is equal to $1$ almost everywhere on $N_{B_{1,k'}}(b)$ for almost every $b\in B_{1,k}$.
Hence, $\WW$ and $\graphon^{\psi}$ agree almost everywhere on $B_{1,k'}\times B_{1,k}$.
It follows that $N_{B_{1,k}}(b')$ and the set
\begin{equation}
\{b\in B_{1,k}\ |\ (g_{k}(b))_i\geq (g_{k'}(b'))_i\ \forall i\in[k']\}
\label{eqy}
\end{equation}
differ on a set of measure zero.

We now present the induction step for proving \eqref{eq3} by showing that it holds for $k$ assuming that
\eqref{eq3} holds for the previous value of $k$, i.e., $k-1$.
The third constraint depicted in Figure~\ref{fig:projection} guarantees that
\[\deg_{B_{1,k}}b'=\deg_{B_{5,k-1}}b'\;\mbox{,}\]
which yields that
\begin{equation*}
\deg_{B_{1,k}}b'=\prod_{i\in [k-1]} (1-(g_{k-1}(b'))_i)
\end{equation*}
for almost every $b'\in B_{1,k-1}$.
Since $N_{B_{1,k}}(b')$ and the set (\ref{eqy}) differ on a set of measure zero,
we get that
\begin{equation}
\frac{\lambda(\{b\in B_{1,k}\ |\ (g_{k}(b))_i\geq (g_{k-1}(b'))_i\ \forall i\in [k-1]\})}{\lambda(B_{1,k})}=\prod_{i\in [k-1]} (1-(g_{k-1}(b')_i))\label{eq4}
\end{equation}
for almost every $b'\in B_{1,k-1}$.

The fourth constraint in Figure~\ref{fig:projection} implies that 
\begin{equation}
\deg_{B_{1,k}}b'=\deg_{N_{B_{1,k}}(x)}b'
\label{eqz}
\end{equation}
for almost every $b'\in B_{1,k-1}$ and almost every $x\in B_{2,k}$ (the vertex $b'$ is the root labeled with $B_1$ and
the vertex $x$ is the root labeled with $B_2$ in the constraint);
note that $W(x,y)=1$ almost every $x\in B_{2,k}$ and for almost every $y\in N_{B_{1,k}}(x)$,
by the structure of the graphon established in Subsection~\ref{sec:coord}.
The structure of the graphon established in Subsection~\ref{sec:coord} also implies the following:
it holds for almost every $x\in B_{2,k}$ that
the set $N_{B_{1,k}}(x)$ is the set of vertices $y\in B_{1,k}$ with $(g_k(y))_k\ge \zeta$ for some $\zeta\in [0,1]$, and
it holds for almost every $\zeta\in [0,1]$ that there exists $x\in B_{2,k}$ such that
the set $N_{B_{1,k}}(x)$ is the set of vertices $y\in B_{1,k}$ with $(g_k(y))_k\ge \zeta$.
Hence, the equality \eqref{eqz} guarantees that
\begin{eqnarray}
& & \frac{\lambda(\{b\in B_{1,k}\ |\ (g_{k}(b))_i\geq (g_{k-1}(b'))_i\ \forall i\in[k-1]\})}{\lambda(B_{1,k})}\label{eq5}\\
&=&\frac{\lambda(\{b\in B_{1,k}\ |\ (g_{k}(b))_i\geq(g_{k-1}(b'))_i\ \forall i\in[k-1]\mbox{ and } (g_{k}(b))_{k}\geq \zeta\})}{\lambda(\{b\in B_{1,k}|(g_{k}(b))_k\geq \zeta\})}\nonumber
\end{eqnarray}
for almost every $b'\in B_{1,k-1}$ and almost every $\zeta\in [0,1]$.
The equality \eqref{eq2.2} implies that
$$\lambda(\{b\in B_{1,k}|(g_{k}(b))_k\geq \zeta\})=(1-\zeta)\lambda(B_{1,k})$$
for every $\zeta\in [0,1]$.
This combined with (\ref{eq4}) and (\ref{eq5}) yields that
$$\frac{\lambda(\{b\in B_{1,k}\ |\ (g_{k}(b))_i\geq(g_{k-1}(b'))_i\ \forall i\in[k-1]\mbox{ and } (g_{k}(b))_{k}\geq \zeta\})}{(1-\zeta)\lambda(B_{1,k})}$$
$$=\prod\limits_{i\in [k-1]} (1-(g_{k-1}(b)_i))$$
for almost every $b'\in B_{1,k-1}$ and almost every $\zeta\in [0,1]$.
Since the image of $g_{k-1}$ is dense even after removing a set of measure zero from its domain,
we conclude that
$$\frac{\lambda(\{b\in B_{1,k}\ |\ (g_{k}(b))_i\geq z_i\ \forall i\in[k]\})}{\lambda(B_{1,k})}=\prod_{i\in[k]} (1-z_{i})$$
for every $z\in [0,1]^k$. 
However, this is equivalent (by applying a straightforward manipulation using the principle of inclusion and exclusion) to (\ref{eq3}) for $k$.
Since $g_k$ satisfies \eqref{eq3}, the map $\psi_{B_{1,k}}$ is measure preserving.
Consequently, $\psi_{B_1}$ is a measure preserving map.

We have shown that the graphons $\WW$ and $\graphon^{\psi}$ agree almost everywhere on $B_{1,k'}\times B_{1,k}$ for $k'\not=k$.
It remains to analyze the structure of the graphon $\WW$ on $B_{1,k}\times B_{1,k}$, $k\in\NN$.
Fix $k\in\NN$.
The first constraint in Figure~\ref{fig:proj2} forces that
$N_{B_2}(b'\setminus b)$ or $N_{B_2}(b\setminus b')$ has measure zero for almost all $b, b' \in B_{1,k}$ with $\WW(b,b')>0$.
Hence, $N_{B_{1,k}}(b)$ is a subset of the set
\begin{equation}
  \{b'\in B_{1,k}\ |\ (g_k(b))_i\le (g_k(b'))_i\ \forall i\in [k]\}\cup
  \{b'\in B_{1,k}\ |\ (g_k(b))_i\ge (g_k(b'))_i\ \forall i\in [k]\} 
\label{eq-proj2}  
\end{equation}
for almost every $b\in B_{1,k}$.
Since $\psi_{B_1}$ is a measure preserving map and
the graphons $\WW$ and $\graphon^{\psi}$ are equal almost everywhere on $B_1\times B_2$, $B_1\times B_4$ and $B_1\times B_5$,
it follows that the measure of the set given in \eqref{eq-proj2} is equal to
$$\lambda(B_{1,k})\left(\prod_{i=1}^{k} \deg_{B_{2,i}}b +\prod_{i=1}^{k} (1-\deg_{B_{2,i}}b)\right)$$
$$=\lambda(B_{1,k})\left(\deg_{B_{4,k}} b + \deg_{B_{5,k}}b\right)$$
for almost every $b\in B_{1,k}$.

The second constraint in Figure~\ref{fig:proj2} implies that $\deg_{B_{1,k}} b=\deg_{B_{4,k}} b + \deg_{B_{5,k}}b$ for almost every $b\in B_{1,k}$.
Hence, it must hold that $N_{B_{1,k}}(b)$ is the set given in \eqref{eq-proj2} and
$\WW(b,b')=1$ for almost every $b\in B_{1,k}$ and $b'\in N_{B_{1,k}}(b)$.
We conclude that the graphons $\WW$ and $\graphon^{\psi}$ agree almost everywhere on $B_{1,k}\times B_{1,k}$ for every $k\in\NN$.

\subsection{Infinite constraints}
\label{sec:inf}

In this subsection,
we establish that the graphon $\WW$ is equal to $\graphon^{\psi}$ almost everywhere on $B_{1}\times D$ by proving that
the two graphons are equal almost everywhere on $B_{1,k}\times D$ for every $k\in\NN$.
We also establish that $\psi_{D}$ is a measure preserving map by showing for every $k\in\NN$ that
\begin{equation}
\lambda\left(\{d\in D\ |\ (g_{\infty}(d))_i\leq z_i\ \forall i\in [k]\}\right)=\lambda(D) \prod_{i\in [k]}z_i\;\mbox{ for all }z\in [0,1]^{k}
\label{eq3i}
\end{equation}
Fix $k\in\NN$ for the rest of the subsection.

Let $d\in D$ and $b\in B_{1,k}$. We define 
$$M^k_{B_1}(d)=\{b\in B_{1,k}\ |\ \forall\ i\in[k] \deg_{B_{2,i}}b\leq\deg_{B_{2,i}}d\}\mbox{ and}$$
$$M^k_{D}(b)=\{d\in D\ |\ \forall\ i\in[k] \deg_{B_{2,i}}d\geq\deg_{B_{2,i}}b\}.$$
We obtain using~(\ref{eq3}) that
$$\lambda(M^k_{B_1}(d))=\lambda(B_{1,k})\cdot \prod_{i=1}^{k}\deg_{B_{2,i}}d=\lambda(B_{1,k})\cdot \prod_{i=1}^{k} (g_{\infty}(d))_i$$
for almost every $d\in D$.

We now analyze the constraints depicted in Figure~\ref{fig:infinite}.
The first constraint forces that $N_{B_2}(b\setminus d)$ has measure zero for almost every $b\in B_1$ and almost every $d\in D$ with $\WW(b,d)>0$. 
It follows that
the set $N_{B_{1,k}}(d)$ is a subset of $M^k_{B_1}(d)$ up to a set of measure zero for almost every $d\in D$ and
the set $N_{D}(b)$ is a subset of $M^k_{D}(b)$ up to a set of measure zero for almost every $b\in B_{1,k}$.

The second constraint in Figure~\ref{fig:infinite} implies that $\deg_{B_{1,k}} d = \deg_{B_{4,k}} d $ for almost every $d\in D$.
We have shown in Subsection~\ref{sec:prod} that $\deg_{B_{4,k}} d=\prod_{i=1}^{k}\deg_{B_{2,i}} d$ for almost every $d\in D$.
Therefore,
\[
\lambda\left(N_{B_{1,k}}(d)\right) \geq
\lambda(B_{1,k})\cdot \deg_{B_{1,k}} d = \lambda(B_{1,k}) \prod_{i=1}^{k}\deg_{B_{2,i}} d
=\lambda\left(M^k_{B_1}(d)\right)
\]
for almost every $d\in D$.
It follows that the sets $N_{B_{1,k}}(d)$ and $M^k_{B_1}(d)$ differ on a set of measure zero and
$W(d,b)=1$ for almost every $d\in D$ and almost every $b\in N_{B_{1,k}}(d)$.
This determines the structure of $\WW$ on $B_{1}\times D$.
In particular, it follows that $\WW$ and $\graphon^{\psi}$ are equal almost everywhere on $B_1\times D$.
Also note that the sets $N_{D}(b)$ and $M^k_{D}(b)$ differ on a set of measure zero for almost every $b\in B_{1,k}$.

We now show that $g_{\infty}$ satisfies \eqref{eq3i}.
The third constraint in Figure~\ref{fig:infinite} implies that 
$$\deg_{D} b=\deg_{B_{5,k}} b=\prod_{i=1}^{k} (1-\deg_{B_{2,i}} b)$$
for almost every $b\in B_{1,k}$. 
Since $\deg_{D} b=\lambda(N_D(b))/\lambda(D)$ for almost every $b\in B_{1,k}$,
we deduce that
\begin{equation}
\frac{\lambda(\{d\in D\ |\ \forall\ i\in[k]\ (g_{\infty}(d))_i\geq\deg_{B_{2,i}}b\})}{\lambda(D)}=\prod_{i=1}^{k}(1-\deg_{B_{2,i}} b)=\prod_{i=1}^{k}(1-(g_k(b))_i)
\label{eqlast}
\end{equation}
for almost every $b\in B_{1,k}$.
Since the image of $g_k$ is dense in $[0,1]^k$ even after removing a set of measure zero from its domain,
we conclude that $g_{\infty}$ satisfies \eqref{eq3i}.
It follows that $\psi_D$ is measure preserving.

\subsection{Structure involving the parts $E_1, E_2$ and $F$}

Let $I=[0,1]\setminus(E_1\cup E_2 \cup F)$.
The degree unifying constraints, which are depicted in Figure~\ref{fig:XxE},
imply that for every $X\in \{A_0, \ldots, A_3, B_1,\ldots, B_5, C\}$ and almost every $x,x'\in X$:
$$\frac{1}{\lambda(E_1)}\int\limits_{E_1}\WW(x,z)\,\dif z=(1-\deg_{I}x) \mbox{ and}$$
$$\frac{1}{\lambda(E_1)}\int\limits_{E_1}\WW(x,z)\WW(x',z)\,\dif z=(1-\deg_{I}x)(1-\deg_{I}x').$$
The reasoning given in~\cite[proof of Lemma 3.3]{bib-lovasz11+} implies that the latter identity
holds for almost every $x=x'\in X$, i.e., it holds that
$$\frac{1}{\lambda(E_1)}\int\limits_{E_1}(\WW(x,z))^2\,\dif z=(1-\deg_{I}x)^2$$
for almost every $x\in X$.
The Cauchy-Schwarz inequality yields that $\WW(x,z)=1-\deg_{I}x$ for almost every $x\in X$ and $z\in E_1$.
This implies that $\deg_{[0,1]\setminus (E_2\cup F)} x=1/2$ for almost every $x\in I\setminus D$.
Since the graphons $\graphon^{\psi}$ and $\WW$ agree almost everywhere on $I^2$,
almost every $x\in I$ must have the same relative degree on $I$ in both $\graphon^{\psi}$ and $\WW$.
It follows that $\graphon^{\psi}$ and $\WW$ agree almost everywhere on $I\times E_1$.

Similarly, the constraints depicted in  Figure~\ref{fig:XxE2} imply that 
$$\deg_{B_1\cup B_2\cup B_4\cup B_5 \cup E_2} x=1/2$$
for almost every $x\in D$, which implies that $\graphon^{\psi}$ and $\WW$ are equal almost everywhere on $D\times E_2$.
Finally, the two degree distinguishing constraints yield that the graphon $\WW$ on $X\times F$, for $X=A_1,\ldots$, $A_3$, $B_1,\ldots, B_5$, $C$,
is constant and its density is the one given by Table~\ref{tab:FxX}.
We conclude that the graphons $\graphon^{\psi}$ and $\WW$ are equal almost everywhere.

\section{Conclusion}
\label{sec:concl}

The method for establishing that a graphon is finitely forcible using decorated constraints,
which originated in~\cite{bib-rademacher} and was further developed in this paper,
turned out to be useful in several follow up results, which we now mention.
First, Cooper et al.~\cite{bib-fup-CKKN} addressed one of the motivations for Conjecture~\ref{conj:2} and
constructed a finitely forcible graphon $W$ such that the number of parts in every weak $\varepsilon$-regular partition of $W$
is at least $2^{\Omega\left(\varepsilon^{-2}/2^{5\log^*\varepsilon^{-2}}\right)}$
for an infinite sequence of $\varepsilon$ tending to $0$.
This almost matches the general upper bound of $2^{\Theta(\log^2\varepsilon^{-1})}$
on the number of parts in weak $\varepsilon$-regular partitions~\cite{bib-frieze99+}.
It is worth noting that
while for any $\varepsilon>0$, there is a graphon $W$ such that
each weak $\varepsilon$-regular $W$ has at least $2^{\Omega(\varepsilon^{-2})}$ parts,
there is no graphon such that
every weak $\varepsilon$-regular partition of $W$ is at least $2^{\Omega(\varepsilon^{-2})}$ parts
for an infinite sequence of $\varepsilon$ tending to zero.
The line of research on constructions of complex finitely forcible graph limits
culminated with the result of Cooper et al.~\cite{bib-fup-CKM} that
every graphon is a subgraphon of a finitely forcible graphon.
This general result of Cooper~et~al.~was also a key ingredient in the argument of Grzesik~et~al.~in~\cite{bib-fup-GKL}
for disproving a conjecture of Lov\'asz that every extremal graph theory problem has a finitely forcible optimum,
which was one of the most cited open problems on dense graph limits.

\section*{Acknowledgments}

The authors would like to thank Jan Volec for his comments on the results contained in this paper and very useful suggestions regarding their presentation.
They would also like to thank L\'aszl\'o Lov\'asz for his comments on the relation of the dimension of the space of typical vertices to other aspects of graph limits.
Last but not least, the authors would like to thank all the anonymous referees for their insightful comments that
have helped to improve the presentation of the paper very significantly.

\end{document}